\newtheorem{theorem}{Theorem}[section]
\newtheorem{lemma}[theorem]{Lemma}
\newtheorem{corollary}[theorem]{Corollary}
\theoremstyle{definition}
\theoremstyle{remark}
\newtheorem{remark}[theorem]{Remark}
\numberwithin{equation}{section}
\begin{document}

\title{Zeta functions of spheres and real projective spaces}

%    Information for first author
\author{Lee-Peng Teo}
%    Address of record for the research reported here
\address{Department of Applied Mathematics, University of Nottingham Malaysia Campus, Jalan Broga, 43500, Semenyih, Selangor, Malaysia.}
%    Current address

\email{LeePeng.Teo@nottingham.edu.my}
%    \thanks will become a 1st page footnote.
%\thanks{The first author was supported in part by NSF Grant \#000000.}

%    Information for second author

%    General info
\subjclass[2000]{Primary 11M41}

\date{\today}

%\dedicatory{This paper is dedicated to our advisors.}

\keywords{zeta functions, spheres, projective spaces}

\begin{abstract}
The Minakshisundaram-Pleijel zeta function $Z_k(s)$ of a $k$-dimensional sphere $S^k$, $k\geq 2$, is given by the Dirichlet series
\begin{equation*}
Z_k(s)=\sum_{n=1}^{\infty} \frac{P_k(n)}{\left[n(n+k-1)\right]^s}
\end{equation*}when $\displaystyle\text{Re}\,s>k/2$. Here $(k-1)!P_k(n)=(2n+k-1)(n+1)\ldots (n+k-2)$. In \cite{2}, it was shown that $Z_k(s)$ can be expressed in terms of the Hurwitz zeta function $\displaystyle \zeta(s;a)=\sum_{n=0}^{\infty}(n+a)^{-s}$ in the following way:
\begin{equation*}
Z_k(s)=\frac{1}{(k-1)!}\sum_{l=0}^{\infty} (-1)^l \begin{pmatrix} -s\\l\end{pmatrix}\left(\frac{k-1}{2}\right)^{2l}
\sum_{j=0}^{k-1}B_{k,j}\zeta\left(2s+2l-j;\frac{k+1}{2}\right),
\end{equation*} where
\begin{equation*}
B_{k,j}=\sum_{p=0}^{k-j-1}(-1)^{k+j+1}\begin{pmatrix} j+p\\j\end{pmatrix}\left(\frac{k-1}{2}\right)^p\left(s_{k,j+p+1}+s_{k-1,j+p}\right)
\end{equation*}is expressed in terms of the Stirling numbers of the first kind $s_{n,k}$.

In this work, we gave another much simpler derivation of the formula above, and prove that $B_{k,j}$ is the coefficient of $x^j$ of a certain polynomial  of degree $(k-1)$. The polynomial is odd if $k$ is even and is even if $k$ is odd. This immediately implies that $B_{k,k-2h}=0$ if $h$ is an integer, a result proved in \cite{2} using a much more complicated method. As a byproduct, we  give a simple recursive algorithm to compute the coefficients $B_{k,j}$.

It has been shown in \cite{2} that $Z_k(s)$ at most has  simple poles at $\displaystyle s=\frac{k}{2}-n$, where $n$ is a nonnegative integer. We strengthen this result and show that when $k$ is even, $Z_k(s)$ at most has simple poles at $\displaystyle s=1,2,\ldots,\frac{k}{2}$.

As have been observed in \cite{2}, when $k$ is odd and $n$ is a positive integer, $Z_k(-n)=0$. We show that when $k$ is odd, $Z_k(0)=-1$. When $k$ is even and $n$ is a nonnegative integer, we derive the values of $Z_k(-n)$ as a finite sum over $B_{k,k-2h-1}$ and the values of the Riemann zeta function at negative odd integers. This shows that $Z_k(-n)$ are rational numbers.

The results are extended to real projective spaces.

\end{abstract}

\maketitle

\section{Introduction}In \cite{5}, Minakshisundaram and   Pleijel defined a zeta function for any  compact Riemannian manifold $M$ in the following way. Consider the Laplace operator acting on $L^2$ functions on $M$. It is well known that the eigenvalues are nonnegative. The Minakshisundaram-Pleijel zeta function of $M$ is defined as
\begin{equation*}
\zeta_M(s)=\sum_{\lambda}\frac{1}{\lambda^s},
\end{equation*}where the sum is over all nonzero eigenvalues $\lambda$. This sum is shown to be convergent when $\text{Re}\,s>\text{dim}(M)/2$. Moreover, it can be analytically continued to the whole complex plane with at most simple poles at $s=\text{dim}(M)/2-n$, where $n$ is a nonnegative integer. The residues of the zeta function at the poles can be expressed in terms of metric invariants of the manifold $M$. However, for general manifold $M$, it is not easy to compute these invariants.

For the simplest compact manifold -- the unit circle $S^1$, the eigenvalues of the Laplace operator are  $n^2$, where $n\geq 0$. For each $n\geq 1$, $n^2$ is an eigenvalue with multiplicity 2. Therefore, the  Minakshisundaram-Pleijel zeta function of the unit circle $S^1$ is
\begin{equation*}
Z_1(s)=\sum_{n=1}^{\infty}\frac{2}{n^{2s}}=2\zeta(2s),
\end{equation*}where $\zeta(s)$ is the Riemann-zeta function. The Riemann zeta function has been actively under research. It is closely related to the distribution of primes.  It is   well-known that this function has trivial zeros on negative even integers. Its values at the negative odd integers are also well-known and they are rational numbers.

In general, the Minakshisundaram-Pleijel zeta function of a manifold plays an important role in studying the asymptotic behavior of the eigenvalues. It also appears extensively  in the study of theoretical physics especially in computing partition function and Casimir effect. The Minakshisundaram-Pleijel zeta function of $k$-dimensional spheres have been studied in a number of works \cite{1,2,8,7,6,5,3}. The starting point of this work is to give another much more intuitive proof for the formula derived in \cite{2}. We also study the values of the zeta functions at non-positive integers.

 \section{The Minakshisundaram-Pleijel zeta function of spheres}
Let $S^k$ be the $k$-dimensional unit sphere. It is well-known that for the Laplace operator acting on the $L^2$ functions on $S^k$, the  eigenvalues are
\begin{align*}
\lambda_n=n(n+k-1)
\end{align*}
  with multiplicities
\begin{align*}P_k(n)=\frac{(2n+k-1)(n+1)\ldots (n+k-2)}{(k-1)!}.
\end{align*}Here $n$ is a nonnegative integer. Therefore, the Minakshisundaram-Pleijel zeta function of $S^k$ is given by the Dirichlet series
\begin{equation*}
Z_k(s)=\sum_{n=1}^{\infty} \frac{P_k(n)}{\left[n(n+k-1)\right]^s}
\end{equation*}when $\displaystyle \text{Re}\,s>\frac{k}{2}$.

Recall that the Stirling numbers of the first kind $s_{n,k}$ are defined by
\begin{equation}\label{eq11_26_3}
x(x+1)\ldots(x+n-1)=\sum_{k=0}^{n}(-1)^{n+k}s_{n,k}x^k.
\end{equation}

In \cite{2}, Carletti and Bragadin used their results in \cite{1} and the properties of the Stirling numbers to prove the following theorem.

\begin{theorem}\label{theorem1}
For $k\geq 2$,
\begin{equation*}
Z_k(s)=\frac{1}{(k-1)!}\sum_{l=0}^{\infty} (-1)^l \begin{pmatrix} -s\\l\end{pmatrix}\left(\frac{k-1}{2}\right)^{2l}
\sum_{j=0}^{k-1}B_{k,j}\zeta\left(2s+2l-j;\frac{k+1}{2}\right),
\end{equation*}
where
\begin{equation}\label{eq11_25_1}
B_{k,j}=\sum_{p=0}^{k-j-1}(-1)^{k+j+1}\begin{pmatrix} j+p\\j\end{pmatrix}\left(\frac{k-1}{2}\right)^p\left(s_{k,j+p+1}+s_{k-1,j+p}\right),
\end{equation}and
$$ \zeta(s;a)=\sum_{n=0}^{\infty}\frac{1}{(n+a)^{s}}$$ is the Hurwitz zeta function.

\end{theorem}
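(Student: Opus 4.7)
The plan is to symmetrize the denominator via the substitution $m = n + (k-1)/2$. Under this change $n(n+k-1) = m^{2} - ((k-1)/2)^{2}$, and the numerator $(2n+k-1)(n+1)\cdots(n+k-2)$ becomes $2m\prod_{j=1}^{k-2}\!\bigl(m + j - (k-1)/2\bigr)$, a polynomial $Q_k(m) = \sum_{j=0}^{k-1} B_{k,j}\,m^{j}$ of degree $k-1$ whose roots are arranged symmetrically about the origin. Once $Q_k$ is in hand, expanding $\bigl[m^{2} - ((k-1)/2)^{2}\bigr]^{-s}$ by the generalized binomial theorem immediately produces the double sum over $l$ and $j$ displayed in the theorem.

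To identify the coefficients $B_{k,j}$ with the Stirling-number formula (1.2), I would first split $2n+k-1 = n + (n+k-1)$ to obtain
$$(2n+k-1)(n+1)\cdots(n+k-2) = n(n+1)\cdots(n+k-2) + (n+1)(n+2)\cdots(n+k-1).$$
The first term is a Pochhammer symbol of length $k-1$, and by (1.1) equals $\sum_{i=0}^{k-1}(-1)^{k-1+i}s_{k-1,i}\,n^{i}$. The second equals $[n(n+1)\cdots(n+k-1)]/n$; applying (1.1) to the length-$k$ Pochhammer and using $s_{k,0}=0$ gives $\sum_{i=0}^{k-1}(-1)^{k+i+1}s_{k,i+1}\,n^{i}$. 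Adding these, substituting $n = m - (k-1)/2$, and expanding each $n^{i}$ by the binomial theorem, I would regroup by powers of $m$ via $j = i - p$; the signs collapse to a uniform $(-1)^{k+j+1}$ because $(-1)^{p}$ appears twice, and the coefficient of $m^{j}$ then matches (1.2) precisely.

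For the analytic step, take $\text{Re}\,s > k/2$ and expand
$$\Bigl[m^{2} - \bigl((k-1)/2\bigr)^{2}\Bigr]^{-s} = \sum_{l=0}^{\infty} (-1)^{l}\binom{-s}{l}\left(\frac{k-1}{2}\right)^{2l} m^{-2s-2l},$$
which converges absolutely whenever $m = n + (k-1)/2 > (k-1)/2$, i.e.\ for every $n \geq 1$. Multiplying by $Q_k(m)/(k-1)!$ and interchanging the two sums (justified by absolute convergence in the region $\text{Re}\,s > k/2$), the remaining $n$-sum is
$$\sum_{n=1}^{\infty}(n + (k-1)/2)^{\,j-2s-2l} = \zeta\bigl(2s+2l-j;\,(k+1)/2\bigr),$$
which delivers the stated identity.

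The main technical obstacle is the bookkeeping in the passage from the $n$-expansion to the $m$-expansion: one must carefully verify that, after the binomial expansion of $n^{i}$ and the reindexing $j = i - p$, the sign collapses to the advertised $(-1)^{k+j+1}$ and the inner summation range is exactly $p = 0, 1, \ldots, k-j-1$. All remaining ingredients --- the Stirling identity (1.1), the generalized binomial series, and the definition of the Hurwitz zeta function --- are classical.
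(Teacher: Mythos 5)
Your proof is correct, and its analytic core --- completing the square via $m=n+\tfrac{k-1}{2}$, expanding $\bigl[m^2-(\tfrac{k-1}{2})^2\bigr]^{-s}$ by the binomial series (convergent since $m>\tfrac{k-1}{2}$ for $n\geq 1$), and summing over $n$ to produce $\zeta\bigl(2s+2l-j;\tfrac{k+1}{2}\bigr)$ --- is exactly the paper's proof of Theorem \ref{theorem3}. Where you genuinely diverge is in identifying the polynomial coefficients with the Stirling-number expression \eqref{eq11_25_1}. The paper first proves Lemma \ref{lemma1}, using the recurrence $s_{n+1,k}=s_{n,k-1}-ns_{n,k}$ to collapse $B_{k,j}$ into a single sum over $s_{k-1,\cdot}$, and then matches this against the expansion of the factored form $2x\,(x-\tfrac{k-1}{2}+1)\cdots(x-\tfrac{k-1}{2}+k-2)$. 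You instead split $2n+k-1=n+(n+k-1)$ to write the multiplicity numerator as a sum of two Pochhammer symbols, $n(n+1)\cdots(n+k-2)+(n+1)\cdots(n+k-1)$, whose Stirling expansions produce the combination $s_{k,i+1}+s_{k-1,i}$ directly; the shift $n=m-\tfrac{k-1}{2}$ and the reindexing $p=i-j$ (with the sign check $(-1)^{k+i+1}(-1)^{i-j}=(-1)^{k+j+1}$, which is correct) then give \eqref{eq11_25_1} verbatim. Your route is the more economical way to reach Theorem \ref{theorem1} exactly as stated, since it bypasses the recurrence entirely; the paper's detour through Lemma \ref{lemma1} pays off elsewhere, as the single-sum form and the factored polynomial \eqref{eq11_26_7} are what drive the later parity and vanishing results (Theorem \ref{theorem2} and the recursion in Theorem \ref{theorem7}).
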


They also proved that
\begin{theorem}\label{theorem2}
Let $B_{k,j}$ be defined by \eqref{eq11_25_1}. Then $B_{k,j}=0$ if $j=0$ or if $j=k-2h$ with $\displaystyle 1\leq h\leq \left[\tfrac{k}{2}\right]$.
\end{theorem}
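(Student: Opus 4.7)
The plan is to identify $B_{k,j}$ as the coefficient of $x^j$ in a single explicit polynomial of degree $k-1$, and then read off both vanishing statements from the parity of that polynomial.

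As a first step, I would reindex \eqref{eq11_25_1} by setting $m=j+p$, rewriting
\[B_{k,j}=(-1)^{k+j+1}\sum_{m=j}^{k-1}\binom{m}{j}\left(\tfrac{k-1}{2}\right)^{m-j}\!\bigl(s_{k,m+1}+s_{k-1,m}\bigr).\]
The inner sum is precisely a binomial shift. Letting $g(x):=(-1)^{k+1}\sum_{m=0}^{k-1}\bigl(s_{k,m+1}+s_{k-1,m}\bigr)x^m$, one sees that the coefficient of $x^j$ in $g\!\bigl(x+\tfrac{k-1}{2}\bigr)$ equals $(-1)^j B_{k,j}$. Absorbing the $(-1)^j$ into the reflection $x\mapsto -x$ identifies $B_{k,j}$ with the coefficient of $x^j$ in
\[F(x):=g\!\left(\tfrac{k-1}{2}-x\right).\]

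Next I would compute $g$ in closed form. Replacing $x$ by $-x$ in \eqref{eq11_26_3} collapses the generating identity into the falling factorial $\sum_r s_{k,r}x^r=x(x-1)\cdots(x-k+1)$. From this I can read off $\sum_m s_{k,m+1}x^m=(x-1)(x-2)\cdots(x-k+1)$ and $\sum_m s_{k-1,m}x^m=x(x-1)\cdots(x-k+2)$. Factoring out the common piece $(x-1)(x-2)\cdots(x-k+2)$ yields
\[g(x)=(-1)^{k+1}(2x-k+1)(x-1)(x-2)\cdots(x-k+2).\]

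The shift $x\mapsto\tfrac{k-1}{2}-x$ now acts very cleanly: the linear factor $2x-k+1$ becomes $-2x$, and the remaining $k-2$ factors become $a_i-x$ with $a_i=\tfrac{k-1}{2}-i$ for $i=1,\ldots,k-2$. Crucially, $\{a_1,\ldots,a_{k-2}\}$ is a set symmetric about $0$; pairing $a$ with $-a$ turns $(a-x)(-a-x)$ into $x^2-a^2$, while an isolated $0$ in the set --- which occurs precisely when $k$ is odd, via $i=(k-1)/2$ --- contributes an extra factor of $-x$. Hence $F(x)$ is a polynomial of degree $k-1$ that is odd in $x$ when $k$ is even, even in $x$ when $k$ is odd, and vanishes at $x=0$ in both cases. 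Theorem \ref{theorem2} follows at once: $B_{k,0}=F(0)=0$, and since $k$ and $k-2h$ have the same parity while only monomials of the opposite parity appear in $F$, we conclude $B_{k,k-2h}=0$ for every integer $h$ in the stated range.

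I do not foresee a serious obstacle once this reindexing is spotted. The only point requiring some care is the degenerate case $k=2$, in which the product $(x-1)\cdots(x-k+2)$ is empty and $F(x)=2x$; this is consistent with $B_{2,0}=0$ and $B_{2,1}=2$.
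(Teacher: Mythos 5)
Your proof is correct, and it lands on exactly the same punchline as the paper: the generating polynomial $\sum_j B_{k,j}x^j$ factors as $2x$ times a product of linear factors whose roots are symmetric about the origin (the paper's identity \eqref{eq11_26_7}, specialising to \eqref{eq11_26_8} and \eqref{eq11_26_9}), after which the vanishing of $B_{k,0}$ and of $B_{k,k-2h}$ is an immediate parity observation. Where you differ is in how you reach that factorization. The paper first proves Theorem \ref{theorem4}, identifying $B_{k,j}$ with the coefficients $C_{k,j}$ of the shifted multiplicity polynomial $P_k\bigl(x-\tfrac{k-1}{2}\bigr)$; that identification in turn rests on Lemma \ref{lemma1}, which massages the definition \eqref{eq11_25_1} using the Stirling recurrence $s_{n+1,k}=s_{n,k-1}-ns_{n,k}$ and a Pascal-rule cancellation of binomial coefficients. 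You instead work entirely from \eqref{eq11_25_1}: you recognise the inner sum over $p$ as a Taylor shift of a generating polynomial $g$, express $g$ in closed form via the falling-factorial identity $\sum_r s_{n,r}x^r=x(x-1)\cdots(x-n+1)$, and combine the two Stirling pieces by factoring out the common product rather than by invoking the recurrence (your step $(x-k+1)+x=2x-k+1$ is the polynomial avatar of that recurrence). This makes your argument self-contained --- it proves Theorem \ref{theorem2} without needing Theorems \ref{theorem3} and \ref{theorem4} as scaffolding --- at the cost of not exhibiting the conceptual link to $P_k$ that the paper exploits elsewhere (e.g.\ in Theorem \ref{theorem12} and the projective-space section). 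Your handling of the degenerate case $k=2$ and of the isolated root at $0$ for odd $k$ is also correct.
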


In this work,  we reprove Theorem \ref{theorem1} and \ref{theorem2} in much simpler ways. In fact, we prove that
\begin{theorem}\label{theorem3}
For $k\geq 2$,
\begin{equation}\label{eq11_26_1}
Z_k(s)=\frac{1}{(k-1)!}\sum_{l=0}^{\infty} (-1)^l \begin{pmatrix} -s\\l\end{pmatrix}\left(\frac{k-1}{2}\right)^{2l}
\sum_{j=0}^{k-1}C_{k,j}\zeta\left(2s+2l-j;\frac{k+1}{2}\right),
\end{equation}
where the constants $C_{k,j}$ are defined so that
\begin{equation}\label{eq11_25_2}
P_k\left(x-\frac{k-1}{2}\right)=\frac{1}{(k-1)!}\sum_{j=0}^{k-1} C_{k,j} x^j.
\end{equation}
\end{theorem}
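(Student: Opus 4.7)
The plan is to compute $Z_k(s)$ directly by a change of variable that symmetrizes the denominator, and then recognize the resulting double series as the desired combination of Hurwitz zeta functions.

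First I would observe the algebraic identity
\begin{equation*}
n(n+k-1)=\left(n+\frac{k-1}{2}\right)^2-\left(\frac{k-1}{2}\right)^2,
\end{equation*}
which is the heart of the argument: it suggests that $x:=n+\tfrac{k-1}{2}$ is the natural variable. Using the definition \eqref{eq11_25_2} of the constants $C_{k,j}$, I can rewrite the numerator as
\begin{equation*}
P_k(n)=P_k\!\left(x-\frac{k-1}{2}\right)=\frac{1}{(k-1)!}\sum_{j=0}^{k-1}C_{k,j}\,x^{j}.
\end{equation*}

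Next I would factor $x^{-2s}=(n+\tfrac{k-1}{2})^{-2s}$ out of the denominator and use the generalized binomial series to write
\begin{equation*}
\bigl[n(n+k-1)\bigr]^{-s}=\left(n+\frac{k-1}{2}\right)^{-2s}\sum_{l=0}^{\infty}(-1)^{l}\binom{-s}{l}\left(\frac{k-1}{2}\right)^{2l}\!\left(n+\frac{k-1}{2}\right)^{-2l},
\end{equation*}
which converges absolutely provided $n+\tfrac{k-1}{2}>\tfrac{k-1}{2}$, that is for every $n\geq 1$. Substituting both expansions into the defining Dirichlet series for $Z_k(s)$ gives a triple sum over $n$, $l$ and $j$, and the remaining $n$-sum is
\begin{equation*}
\sum_{n=1}^{\infty}\left(n+\frac{k-1}{2}\right)^{j-2s-2l}=\sum_{n=0}^{\infty}\left(n+\frac{k+1}{2}\right)^{j-2s-2l}=\zeta\!\left(2s+2l-j;\,\frac{k+1}{2}\right),
\end{equation*}
which is exactly the Hurwitz zeta factor appearing in \eqref{eq11_26_1}. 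Interchanging the three summations then produces the claimed formula.

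The only genuine issue is justifying the interchange of the sum over $l$ with the sum over $n$. For $\operatorname{Re} s>k/2$ the original Dirichlet series converges absolutely, and each binomial expansion converges absolutely with positive terms after taking moduli (since $(k-1)/2<n+(k-1)/2$), so Fubini/Tonelli applies term by term. Hence the identity holds as an identity of analytic functions in the half-plane $\operatorname{Re} s>k/2$, which is all that is needed since both sides extend by analytic continuation. Apart from this standard justification the argument is entirely mechanical, so I do not expect any real obstacle; the main conceptual point is simply recognizing the completing-the-square identity, which replaces the more elaborate manipulations with Stirling numbers used in \cite{2}.
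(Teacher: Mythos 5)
Your argument is correct and is essentially identical to the paper's own proof: the same completing-the-square identity $n(n+k-1)=\left(n+\tfrac{k-1}{2}\right)^2-\left(\tfrac{k-1}{2}\right)^2$, the same binomial expansion, the same use of \eqref{eq11_25_2} to expand $P_k(n)$ in powers of $n+\tfrac{k-1}{2}$, and the same resummation into Hurwitz zeta functions. Your explicit Fubini/Tonelli justification of the interchange of sums is slightly more detailed than the paper's remark that the binomial expansion converges uniformly in $n\geq 1$, but it is the same argument.
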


\begin{theorem}\label{theorem4}
The constants $C_{k,j}$ defined by \eqref{eq11_25_2} coincide  with the constants $B_{k,j}$ defined by \eqref{eq11_25_1}.
\end{theorem}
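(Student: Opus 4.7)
The plan is to show that the coefficient of $x^{i}$ in the polynomial $(k-1)!\,P_k\!\left(x-\tfrac{k-1}{2}\right)$, computed by first expanding $(k-1)!\,P_k(n)$ in powers of $n$ via the Stirling numbers, is exactly the expression \eqref{eq11_25_1} for $B_{k,i}$. Since by definition \eqref{eq11_25_2} this coefficient equals $C_{k,i}$, the identity $C_{k,j}=B_{k,j}$ will follow.

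First I would rewrite $(k-1)!\,P_k(n)$ as a sum of two products of $k-1$ consecutive integers by means of the trivial identity $2n+k-1=n+(n+k-1)$, namely
\begin{equation*}
(k-1)!\,P_k(n)=n(n+1)(n+2)\cdots(n+k-2)+(n+1)(n+2)\cdots(n+k-1).
\end{equation*}
The first summand is directly in the form of \eqref{eq11_26_3} with $n$ replaced by $k-1$; the second is obtained from $n(n+1)\cdots(n+k-1)$ by dividing out the factor $n$, which is legitimate inside the Stirling expansion because $s_{k,0}=0$ (for $k\ge 1$), so the $j=0$ term of \eqref{eq11_26_3} drops out and the remaining sum is divisible by $n$. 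Applying \eqref{eq11_26_3} to both products and reindexing the second, I would obtain
\begin{equation*}
(k-1)!\,P_k(n)=\sum_{j=0}^{k-1}(-1)^{k+j+1}\bigl(s_{k,j+1}+s_{k-1,j}\bigr)n^{j}.
\end{equation*}

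Next I would substitute $n=x-\tfrac{k-1}{2}$ and expand $n^{j}$ by the binomial theorem:
\begin{equation*}
\left(x-\tfrac{k-1}{2}\right)^{j}=\sum_{i=0}^{j}\binom{j}{i}(-1)^{j-i}\left(\tfrac{k-1}{2}\right)^{j-i}x^{i}.
\end{equation*}
Interchanging the order of summation and setting $p=j-i$, the coefficient of $x^{i}$ in $(k-1)!\,P_k\!\left(x-\tfrac{k-1}{2}\right)$ becomes
\begin{equation*}
\sum_{p=0}^{k-i-1}(-1)^{k+i+1}\binom{i+p}{i}\left(\tfrac{k-1}{2}\right)^{p}\bigl(s_{k,i+p+1}+s_{k-1,i+p}\bigr),
\end{equation*}
where I have used $(-1)^{k+j+1}(-1)^{j-i}=(-1)^{k+i+1}$ after simplifying $(-1)^{2j}=1$. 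This is precisely the formula \eqref{eq11_25_1} for $B_{k,i}$. Comparing with \eqref{eq11_25_2}, which defines $C_{k,i}$ as that same coefficient, yields $C_{k,j}=B_{k,j}$ for all $0\le j\le k-1$.

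The only real bookkeeping step is keeping the signs straight through the reindexing and the substitution $p=j-i$, together with the use of $s_{k,0}=0$ to justify dividing by $n$ in the Stirling expansion; there is no analytic subtlety, since everything takes place at the level of polynomial identities in $n$.
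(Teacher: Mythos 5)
Your proof is correct, but it takes a somewhat different route from the paper's. The paper first proves a lemma rewriting $B_{k,j}$ via the Stirling recurrence $s_{n+1,k}=s_{n,k-1}-ns_{n,k}$ into a form involving only $s_{k-1,\cdot}$, namely $B_{k,j}=2\sum_{p}(-1)^{k+j+1}\binom{j+p-1}{j-1}\left(\tfrac{k-1}{2}\right)^p s_{k-1,j+p}$, and then matches this against the expansion of $\sum_j C_{k,j}x^j=2x\left(x-\tfrac{k-1}{2}+1\right)\cdots\left(x-\tfrac{k-1}{2}+k-2\right)$, where the factor $2x$ is pulled out first and only the $(k-1)$-factor product $(x+1)\cdots(x+n-1)$ Stirling expansion is needed. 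You instead split $2n+k-1=n+(n+k-1)$, which produces the combination $s_{k,j+1}+s_{k-1,j}$ appearing verbatim in \eqref{eq11_25_1}; after the shift $n=x-\tfrac{k-1}{2}$ and the binomial expansion with $p=j-i$, you land exactly on the published formula for $B_{k,i}$ with no intermediate massaging, so Lemma \ref{lemma1} becomes unnecessary. Your sign bookkeeping ($(-1)^{k+j+1}(-1)^{j-i}=(-1)^{k+i+1}$), the use of $s_{k,0}=0$ to divide out the factor $n$, and the summation limits ($p$ up to $k-i-1$) all check out. The trade-off is minor: your argument is more direct as a verification of $C_{k,j}=B_{k,j}$ against the formula as stated, while the paper's detour through the Stirling recurrence yields the cleaner single-Stirling-number expression \eqref{eq11_26_5} for $B_{k,j}$ as a by-product.
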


Now we give the proof of Theorem \ref{theorem3}.

\begin{proof} Using the definition \eqref{eq11_25_2} of $C_{k,j}$, we have
\begin{equation*}
P_k(n)=\frac{1}{(k-1)!}\sum_{j=0}^{k-1} C_{k,j} \left(n+\frac{k-1}{2}\right)^j.
\end{equation*}
On the other hand, since
$$\left[n(n+k-1)\right]^{-s}=\left[\left(n+\frac{k-1}{2}\right)^2-\left(\frac{k-1}{2}\right)^2\right]^{-s},$$ when $n\geq 1$, binomial expansion gives an absolutely convergent series
\begin{equation*}
\begin{split}
\left[n(n+k-1)\right]^{-s}
=&\sum_{l=0}^{\infty}(-1)^l\begin{pmatrix} -s\\l\end{pmatrix} \left(n+\frac{k-1}{2}\right)^{-2l-2s}\left(\frac{k-1}{2}\right)^{2l}.
\end{split}\end{equation*}The convergence is uniform for all $n\geq 1$.
Therefore,
\begin{equation*}\begin{split}
Z_k(s)=&\frac{1}{(k-1)!}\sum_{n=1}^{\infty} \sum_{l=0}^{\infty}(-1)^l\begin{pmatrix} -s\\l\end{pmatrix}\left(\frac{k-1}{2}\right)^{2l}\sum_{j=0}^{k-1} C_{k,j} \left(n+\frac{k-1}{2}\right)^{-2l-2s+j}\\
=&\frac{1}{(k-1)!}\sum_{l=0}^{\infty} (-1)^l \begin{pmatrix} -s\\l\end{pmatrix}\left(\frac{k-1}{2}\right)^{2l}
\sum_{j=0}^{k-1}C_{k,j}\zeta\left(2s+2l-j;\frac{k+1}{2}\right).
\end{split}\end{equation*}
\end{proof}

Compare to the proof given in \cite{2}, this proof we give here is much more succinct and clearly reflect the true nature of the formula \eqref{eq11_26_1}. Moreover, \eqref{eq11_25_2} gives a very straightforward way to compute the coefficients $C_{k,j}$, compare to the formula for $B_{k,j}$ given by \eqref{eq11_25_1} in terms of the Stirling numbers. However, to show the equivalence of our Theorem \ref{theorem3} with Theorem \ref{theorem1} proved in \cite{2}, we need to prove Theorem \ref{theorem4}.

Let us start with a lemma.
\begin{lemma}\label{lemma1}
\begin{equation}\label{eq11_26_5}
B_{k,j}=2\sum_{p=0}^{k-j-1}(-1)^{k+j+1}\begin{pmatrix} j+p-1\\j-1\end{pmatrix}\left(\frac{k-1}{2}\right)^ps_{k-1,j+p}.
\end{equation}
\end{lemma}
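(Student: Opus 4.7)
The plan is to reduce the expression \eqref{eq11_25_1} for $B_{k,j}$ to one involving only Stirling numbers of the form $s_{k-1,\cdot}$ by exploiting the standard recurrence for the signed Stirling numbers of the first kind, and then to combine the resulting binomial coefficients via Pascal's rule.

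First, from \eqref{eq11_26_3} applied to the identity $x(x+1)\cdots(x+k-1)=[x(x+1)\cdots(x+k-2)](x+k-1)$, comparing the coefficient of $x^l$ on both sides gives the recurrence
\begin{equation*}
s_{k,l}=s_{k-1,l-1}-(k-1)s_{k-1,l}.
\end{equation*}
Specialising to $l=j+p+1$ and adding $s_{k-1,j+p}$ to both sides yields
\begin{equation*}
s_{k,j+p+1}+s_{k-1,j+p}=2s_{k-1,j+p}-(k-1)s_{k-1,j+p+1},
\end{equation*}
which I would substitute into \eqref{eq11_25_1}, writing $B_{k,j}=A-B$, where
\begin{equation*}
A=2\sum_{p=0}^{k-j-1}(-1)^{k+j+1}\begin{pmatrix}j+p\\j\end{pmatrix}\left(\frac{k-1}{2}\right)^p s_{k-1,j+p},
\end{equation*}
and $B$ is the analogous sum with $s_{k-1,j+p}$ replaced by $(k-1)s_{k-1,j+p+1}$.

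Next, I would reindex $B$ by $p\mapsto p-1$, using the identity $(k-1)\bigl(\tfrac{k-1}{2}\bigr)^{p-1}=2\bigl(\tfrac{k-1}{2}\bigr)^p$, to turn it into a sum from $p=1$ to $p=k-j$ of $s_{k-1,j+p}$ terms with binomial factor $\begin{pmatrix}j+p-1\\j\end{pmatrix}$. The boundary term at $p=k-j$ contains $s_{k-1,k}$, which vanishes since $s_{n,m}=0$ for $m>n$, so I drop it.

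Finally, I would combine $A$ and the reindexed $B$ term by term on the common range $p=1,\ldots,k-j-1$, invoking Pascal's rule in the form
\begin{equation*}
\begin{pmatrix}j+p\\j\end{pmatrix}-\begin{pmatrix}j+p-1\\j\end{pmatrix}=\begin{pmatrix}j+p-1\\j-1\end{pmatrix}.
\end{equation*}
The leftover $p=0$ summand of $A$ equals $2(-1)^{k+j+1}s_{k-1,j}$, which matches the $p=0$ term of \eqref{eq11_26_5} under the convention $\begin{pmatrix}j-1\\j-1\end{pmatrix}=1$ (the edge case $j=0$ is consistent because $s_{k-1,0}=0$ for $k\geq 2$). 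Assembling these pieces gives \eqref{eq11_26_5}. No genuine obstacle is anticipated; the only items requiring care are the index shift in $B$, the accounting of the factor $(k-1)=2\cdot\tfrac{k-1}{2}$, and the verification that the unmatched $p=0$ term of $A$ reproduces the $p=0$ term of the claimed expression.
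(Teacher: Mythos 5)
Your proposal is correct and follows essentially the same route as the paper's proof: the recurrence $s_{k,l}=s_{k-1,l-1}-(k-1)s_{k-1,l}$, the substitution giving $2s_{k-1,j+p}-(k-1)s_{k-1,j+p+1}$, the reindexing of the second sum with the factor $(k-1)=2\cdot\tfrac{k-1}{2}$, and Pascal's rule. Your explicit handling of the vanishing boundary term $s_{k-1,k}$ and the $p=0$ edge case is a slightly more careful rendering of steps the paper leaves implicit.
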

\begin{proof}
We need the following identity \cite{4}:
\begin{equation}\label{eq11_26_2}
s_{n+1,k}=s_{n,k-1}-ns_{n,k}
\end{equation}which can be easily proved using the fact that
\begin{equation}
x(x+1)\ldots(x+n-1)(x+n)=x^2(x+1)\ldots(x+n-1)+nx(x+1)\ldots(x+n-1).
\end{equation}
Using \eqref{eq11_26_2}, we have
\begin{equation*}
\begin{split}
B_{k,j}=\sum_{p=0}^{k-j-1}(-1)^{k+j+1}\begin{pmatrix} j+p\\j\end{pmatrix}\left(\frac{k-1}{2}\right)^p\left[2s_{k-1,j+p}-(k-1)s_{k-1,j+p+1}\right].
\end{split}\end{equation*}Splitting the two terms and re-index the second summation, we have
\begin{equation*}
\begin{split}
B_{k,j}=&2\sum_{p=0}^{k-j-1}(-1)^{k+j+1}\begin{pmatrix} j+p\\j\end{pmatrix}\left(\frac{k-1}{2}\right)^ps_{k-1,j+p}\\
&-\sum_{p=1}^{k-j-1}(-1)^{k+j+1}\begin{pmatrix} j+p-1\\j\end{pmatrix}\left(\frac{k-1}{2}\right)^{p-1}(k-1)s_{k-1,j+p} \\
=&2\sum_{p=0}^{k-j-1}(-1)^{k+j+1}\left[\begin{pmatrix} j+p\\j\end{pmatrix}-\begin{pmatrix} j+p-1\\j\end{pmatrix}\right]\left(\frac{k-1}{2}\right)^ps_{k-1,j+p}.
\end{split}
\end{equation*}
Eq. \eqref{eq11_26_5} then follows from the identity
$$\begin{pmatrix} j+p\\j\end{pmatrix}-\begin{pmatrix} j+p-1\\j\end{pmatrix}=\begin{pmatrix} j+p-1\\j-1\end{pmatrix}.$$
\end{proof}

Now we prove Theorem \ref{theorem4}.
\begin{proof}
By definition \eqref{eq11_25_2},
\begin{equation}\label{eq11_26_6}
\sum_{j=0}^{k-1}C_{k,j}x^j =2x\left(x-\frac{k-1}{2}+1\right)\left(x-\frac{k-1}{2}+2\right)\ldots\left(x-\frac{k-1}{2}+k-2\right).
\end{equation}
Notice   that from the definition \eqref{eq11_26_3}, it is easy to infer that
$$s_{n,0}=0$$ and
\begin{equation*}\begin{split}
(x+1)(x+2)\ldots(x+n-1)=&\sum_{k=0}^{n}(-1)^{n+k} s_{n,k}x^{k-1}.
\end{split}\end{equation*}
Therefore,
\begin{equation*}\begin{split}
\sum_{j=0}^{k-1}C_{k,j}x^j =&2x\sum_{p=0}^{k-1}(-1)^{p+k+1}s_{k-1,p}\left(x-\frac{k-1}{2}\right)^{p-1}\\
=&2x\sum_{p=0}^{k-1}(-1)^{p+k+1}s_{k-1,p}\sum_{j=1}^p\begin{pmatrix} p-1\\j-1\end{pmatrix} x^{j-1} (-1)^{p-j}\left( \frac{k-1}{2}\right)^{p-j}\\
=&2\sum_{j=1}^{k-1} \sum_{p=j}^{k-1} (-1)^{k+j+1} \begin{pmatrix} p-1\\j-1\end{pmatrix} \left( \frac{k-1}{2}\right)^{p-j}s_{k-1,p}x^j\\
=&2\sum_{j=1}^{k-1} \sum_{p=0}^{k-j-1} (-1)^{k+j+1} \begin{pmatrix} j+p-1\\j-1\end{pmatrix} \left( \frac{k-1}{2}\right)^{p}s_{k-1,j+p}x^j.
\end{split}\end{equation*}
Comparing the coefficients of $x^j$ on both sides shows that
\begin{equation*}
C_{k,j}=2\sum_{p=0}^{k-j-1} (-1)^{k+j+1} \begin{pmatrix} j+p-1\\j-1\end{pmatrix} \left( \frac{k-1}{2}\right)^{p}s_{k-1,j+p}.
\end{equation*}
Lemma \ref{lemma1} then proves that $C_{k,j}=B_{k,j}$.
\end{proof}

Now we give another proof of Theorem \ref{theorem2} which is much shorter and intuitive.
\begin{proof}
From Theorem \ref{theorem4} and \eqref{eq11_26_6}, we have
\begin{equation}\label{eq11_26_7}
\begin{split}
\sum_{j=0}^{k-1}B_{k,j}x^j =&2x\left(x-\frac{k-3}{2}\right)\left(x-\frac{k-5}{2}\right)\ldots\left(x+\frac{k-5}{2}\right)\left(x+\frac{k-3}{2}\right).
\end{split}
\end{equation}By putting $x=0$ on both sides, it follows immediately that $B_{k,0}=0$. Now when $k$ is odd, \eqref{eq11_26_7} gives
\begin{equation}\label{eq11_26_8}\begin{split}
\sum_{j=0}^{k-1}B_{k,j}x^j =&2x^2\left(x^2-\left[\frac{k-3}{2}\right]^2\right)\left(x^2-\left[\frac{k-5}{2}\right]^2\right)\ldots\left(x^2-1\right).
\end{split}\end{equation}
The right hand side is an even polynomial. Hence $B_{k,j}=0$ if $j$ is odd. Namely, $B_{k,k-2h}=0$ for $\displaystyle 1\leq h\leq\left[\tfrac{k}{2}\right]$.

When $k$ is even,
 \eqref{eq11_26_7} gives
\begin{equation}\label{eq11_26_9}\begin{split}
\sum_{j=0}^{k-1}B_{k,j}x^j =&2x \left(x^2-\left[\frac{k-3}{2}\right]^2\right)\left(x^2-\left[\frac{k-5}{2}\right]^2\right)\ldots\left(x^2-\left[\frac{1}{2}\right]^2\right).
\end{split}\end{equation}The right hand side is an odd polynomial. Hence $B_{k,j}=0$ if $j$ is even. Namely, we also have $B_{k,k-2h}=0$ for $\displaystyle 1\leq h\leq\left[\tfrac{k}{2}\right]$.
\end{proof}

It is obvious that it is easier to compute the coefficients $B_{k,j}$ using the identities \eqref{eq11_26_8} and \eqref{eq11_26_9} rather than using the expression \eqref{eq11_25_1} in terms of Stirling numbers. In fact, $B_{k,j}$ can be computed recursively.

\begin{theorem}\label{theorem7}For $k\geq 2$,
$B_{k,j}$ is nonzero if $j\neq 0$ and $k$ and $j$ does not have the same parity. In such cases,  $B_{k,j}$ can be computed recursively as follows:
\begin{align}
B_{2,1}=&2,\nonumber\\
B_{3,2}=&2,\nonumber\\
\label{eq11_26_10} B_{k,j}=&B_{k-2,j-2}-\left(\frac{k-3}{2}\right)^2B_{k-2,j},\quad k\geq 4.
\end{align}
In particular,
$B_{k,k-1}=2$ for all $k$.
\end{theorem}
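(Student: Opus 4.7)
The plan is to view the theorem as a straightforward consequence of the explicit product formula \eqref{eq11_26_7}. Write $Q_k(x)=\sum_{j=0}^{k-1}B_{k,j}x^j$. Then \eqref{eq11_26_7} says
\begin{equation*}
Q_k(x)=2x\left(x-\frac{k-3}{2}\right)\left(x-\frac{k-5}{2}\right)\cdots\left(x+\frac{k-3}{2}\right),
\end{equation*}
which is a product of $k-1$ linear factors whose non-$x$ roots are symmetric around $0$. The key observation is that removing the outermost symmetric pair of factors peels off exactly one $Q_{k-2}$, giving the factorization
\begin{equation*}
Q_k(x)=\left(x^2-\left(\frac{k-3}{2}\right)^2\right)Q_{k-2}(x),\qquad k\geq 4.
\end{equation*}

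The recursion \eqref{eq11_26_10} then follows by comparing coefficients of $x^j$ on both sides of this factorization, using the convention $B_{k-2,j}=0$ for $j<0$ or $j>k-3$. The base cases are handled directly: $Q_2(x)=2x$ (empty product) gives $B_{2,1}=2$, and $Q_3(x)=2x\cdot x=2x^2$ gives $B_{3,2}=2$. The assertion $B_{k,k-1}=2$ for all $k$ is immediate from the factored form: $Q_k$ has degree $k-1$ with leading coefficient $2$, which also propagates trivially through the recursion.

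The only substantive point that is not pure bookkeeping is the claim that $B_{k,j}\neq 0$ whenever $j\neq 0$ and $j,k$ have opposite parity. I would argue this as follows. From Theorem \ref{theorem2} we already know $B_{k,j}=0$ when $j,k$ have the same parity, so $Q_k(x)/(2x^{\epsilon})$, with $\epsilon=2$ for odd $k$ and $\epsilon=1$ for even $k$, is a polynomial in $y=x^2$. Directly from the product formula, this polynomial in $y$ is
\begin{equation*}
R_k(y)=\prod_{m}(y-m^2),
\end{equation*}
where $m$ runs through the positive elements of $\{0,\pm 1,\ldots,\pm(k-3)/2\}$ (for $k$ odd, after absorbing the central $m=0$ factor into $x^{\epsilon}$) or through $\{1/2,3/2,\ldots,(k-3)/2\}$ (for $k$ even). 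In either case $R_k(y)$ has distinct strictly positive real roots, so by Vieta's formulas its coefficients alternate in sign and in particular are all nonzero. Translating back, every $B_{k,j}$ with $j\not\equiv k\pmod 2$ and $j\neq 0$ is nonzero.

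I do not foresee a genuine obstacle: the whole theorem is essentially a reading of the product expansion \eqref{eq11_26_7}. The mildly delicate step is the nonvanishing statement, where one must remember to separate the case $j=0$ (explicitly zero from the $x$-factor) from the other coefficients of opposite parity to $k$, and invoke Vieta on the polynomial in $y=x^2$ to guarantee nonvanishing.
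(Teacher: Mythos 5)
Your proposal is correct and follows the same route as the paper: the recursion, the base cases, and $B_{k,k-1}=2$ are all read off from the product formula \eqref{eq11_26_7} by peeling off the outermost pair of factors to obtain $Q_k(x)=\bigl(x^2-\bigl(\tfrac{k-3}{2}\bigr)^2\bigr)Q_{k-2}(x)$ and comparing coefficients, exactly as in the paper (which derives the base cases from $P_2$ and $P_3$ directly, an immaterial difference). The one place you genuinely go beyond the paper is the nonvanishing assertion: the paper dismisses it as ``a rephrasing of Theorem \ref{theorem2},'' but that theorem only establishes that $B_{k,j}=0$ when $j=0$ or $j\equiv k\pmod 2$; it says nothing about the remaining coefficients being nonzero. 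Your Vieta argument --- that $Q_k(x)/(2x^{\epsilon})$ is a monic polynomial in $y=x^2$ with all roots real and positive, so its coefficients are $(\pm1)$ times elementary symmetric functions of positive numbers and hence nonzero --- is exactly the missing step, and it is correct. So your write-up is not merely equivalent to the paper's proof but actually closes a small gap in it.
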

\begin{proof}
The first statement is a rephrasing of Theorem \ref{theorem2}.
For the second statement, notice that
\begin{equation*}\begin{split}
P_2(n)=&2n+1,\\
P_3(n)=&2(n+1)^2.\end{split}
\end{equation*}Therefore,
\begin{equation*}
\begin{split}
P_2\left(x-\frac{1}{2}\right)=&2x,\\
P_3\left(x-1\right)=&2x^2.
\end{split}
\end{equation*}From these it follows that $B_{2,1}=2$ and $B_{3,2}=2$.
For $k\geq 4$, it follows from \eqref{eq11_26_8} and \eqref{eq11_26_9}  that
\begin{equation}
\sum_{j=1}^{k-1}B_{k,j}x^j=\left(x^2-\left[\frac{k-3}{2}\right]^2\right)\sum_{j=1}^{k-3}B_{k-2,j}x^j.
\end{equation}Eq. \eqref{eq11_26_10} then follows by comparing the coefficients of $x^j$ on both sides.
\end{proof}

\vspace{0.2cm}
\begin{corollary}\label{co1}~
\begin{enumerate}

\item[(i)]If $k\geq 2$ is odd, $B_{k,j}$ is an integer.

\item[(ii)] If $k\geq 2$ is even, $2^{k-2}B_{k,j}$ is an integer.
\end{enumerate}
\end{corollary}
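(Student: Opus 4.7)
The plan is to read the integrality off the product formulas \eqref{eq11_26_8} and \eqref{eq11_26_9}, which already display $\sum_{j} B_{k,j} x^j$ as an explicit polynomial with very transparent coefficients.

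For part (i), when $k$ is odd the factors on the right side of \eqref{eq11_26_8} are $x^2 - \left[(k-3)/2\right]^2,\ x^2 - \left[(k-5)/2\right]^2,\ \ldots,\ x^2 - 1$. Since $k$ is odd, each number $(k-2m-1)/2$ is an integer, so every factor has integer coefficients. Multiplying by the leading $2x^2$ preserves this, and expanding gives that every $B_{k,j}$ is an integer.

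For part (ii), when $k$ is even the factors in \eqref{eq11_26_9} are $x^2 - \left[(k-3)/2\right]^2,\ \ldots,\ x^2 - \left[1/2\right]^2$. Each $(k - 2m - 1)/2$ is now a half-integer of the form $(2r+1)/2$, so each factor equals $x^2 - (2r+1)^2/4$. A direct count shows there are exactly $(k-2)/2$ such quadratic factors, so clearing denominators costs a total factor of $4^{(k-2)/2} = 2^{k-2}$; multiplication by the remaining $2x$ is harmless. Hence $2^{k-2}\sum_{j} B_{k,j} x^j$ is a polynomial in $x$ with integer coefficients, which gives the claim.

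The argument is essentially a one-line reading of \eqref{eq11_26_8} and \eqref{eq11_26_9}, so there is no real obstacle; the only thing to be careful about is the count of half-integer factors in the even case, which determines the exponent $k-2$. A quick sanity check with the recursion \eqref{eq11_26_10}, namely $B_{k,j} = B_{k-2,j-2} - \left((k-3)/2\right)^2 B_{k-2,j}$, confirms the bound: for $k$ odd the coefficient $\left((k-3)/2\right)^2$ is an integer, so integrality propagates from $B_{3,j}$; for $k$ even it introduces a denominator $4$ at each step starting from $B_{2,j}$, and iterating $(k-2)/2$ times yields denominator $2^{k-2}$, matching (ii) exactly.
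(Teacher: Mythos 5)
Your proof is correct and is essentially the paper's argument: the paper deduces the corollary by induction from the recursion of Theorem \ref{theorem7}, which is exactly the ``sanity check'' you append, and your direct reading of the factored forms \eqref{eq11_26_8} and \eqref{eq11_26_9} amounts to the same computation, since multiplying in each factor $x^2-\left[\tfrac{2r+1}{2}\right]^2$ is one step of that recursion. Your count of $(k-2)/2$ half-integer quadratic factors in the even case is right, so the exponent $k-2$ checks out.
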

\begin{proof}
This follows from Theorem \ref{theorem7} by induction on odd $k$ and even $k$ respectively.
\end{proof}

There are some interesting identities involving the constants $B_{k,j}$ that have been proved  in complicated ways in \cite{2}. Using the definition \eqref{eq11_25_2}, we can prove them in much simpler ways:

\begin{theorem}\label{theorem12}
For $k\geq 2$,
\begin{equation}\label{eq11_28_1}
\sum_{j=1}^{k-1}B_{k,j}\left(\frac{k-1}{2}\right)^j=(k-1)!.
\end{equation}
Moreover,
\begin{enumerate}
\item[(i)] If $k\geq 2$ is an odd integer and $\displaystyle p=1, 2,\ldots,\frac{k-3}{2}$,
\begin{equation}\label{eq11_28_2}
\sum_{j=1}^{k-1}B_{k,j}p^j=0.
\end{equation}

\item[(ii)] If $k\geq 2$ is an even integer and $\displaystyle p=\frac{1}{2}, \frac{3}{2},\ldots,\frac{k-3}{2}$,
\begin{equation}\label{eq11_28_2}
\sum_{j=1}^{k-1}B_{k,j}p^j=0.
\end{equation}
\end{enumerate}
\end{theorem}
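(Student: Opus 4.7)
The plan is to read everything off the factored form \eqref{eq11_26_7}, namely
$$\sum_{j=0}^{k-1} B_{k,j} x^j \;=\; 2x\left(x-\tfrac{k-3}{2}\right)\left(x-\tfrac{k-5}{2}\right)\cdots\left(x+\tfrac{k-3}{2}\right),$$
together with the fact that $B_{k,0}=0$, so the sums $\sum_{j=0}^{k-1}$ and $\sum_{j=1}^{k-1}$ agree. Both statements then reduce to evaluating this polynomial at a particular value of $x$.

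First, for \eqref{eq11_28_1} I would substitute $x=\tfrac{k-1}{2}$. The $k-2$ factors to the right of $2x$ become $\tfrac{k-1}{2}-\tfrac{k-3}{2},\,\tfrac{k-1}{2}-\tfrac{k-5}{2},\ldots,\tfrac{k-1}{2}+\tfrac{k-3}{2}$, i.e.\ $1,2,\ldots,k-2$, whose product is $(k-2)!$. Multiplying by $2x = k-1$ gives $(k-1)!$, proving the identity.

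Next, for (i) and (ii) I would substitute $x=p$ and observe that $p$ is among the listed roots of the polynomial on the right of \eqref{eq11_26_7}. Concretely, when $k$ is odd, the $k-2$ shift values $-\tfrac{k-3}{2},-\tfrac{k-5}{2},\ldots,\tfrac{k-3}{2}$ are integers, so the roots of the product (besides $x=0$) are exactly $\pm 1,\pm 2,\ldots,\pm\tfrac{k-3}{2}$; hence every $p\in\{1,2,\ldots,\tfrac{k-3}{2}\}$ annihilates the right-hand side. When $k$ is even, the same shift values are half-integers and the nonzero roots are $\pm\tfrac12,\pm\tfrac32,\ldots,\pm\tfrac{k-3}{2}$, so every $p\in\{\tfrac12,\tfrac32,\ldots,\tfrac{k-3}{2}\}$ again annihilates the right-hand side. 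In both cases $\sum_{j=1}^{k-1} B_{k,j} p^j = 0$.

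There is no real obstacle here; the whole point is that \eqref{eq11_26_7} already encodes the roots and the value at $\tfrac{k-1}{2}$, so the two identities are immediate corollaries once one notices which $x$ to plug in. The only minor bookkeeping is checking that the integer/half-integer parity of the shifts matches the ranges of $p$ prescribed in (i) and (ii), which it does automatically.
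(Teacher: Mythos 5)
Your proposal is correct and is exactly the paper's approach: the paper's proof consists of the single remark that all three identities follow by substituting an appropriate value of $x$ into \eqref{eq11_26_7}, and you have simply carried out those substitutions ($x=\tfrac{k-1}{2}$ for \eqref{eq11_28_1}, $x=p$ at the listed roots for (i) and (ii)) in full detail. Your verification of the root sets and of the value $(k-1)!$ at $x=\tfrac{k-1}{2}$ is accurate.
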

\begin{proof}
All these three identities can be proved by substituting an appropriate value of $x$ in \eqref{eq11_26_7}. 
\end{proof}

As is observed in \cite{2}, Theorem \ref{theorem1} shows that $Z_k(s)$ is a meromorphic function. All the poles are simple and they can only appear at $$s=\frac{k}{2}-n,\quad n=0,1,2,\ldots.$$ Moreover,

\begin{theorem}\label{theorem8}
If $k\geq 2$ and  $n$ is a nonnonnegative integer,
\begin{equation}\label{eq11_27_1}\begin{split}
&\text{Res}_{s=\frac{k}{2}-n}Z_k(s)\\=&\frac{1}{2(k-1)!}\sum_{h=0}^{\min\left\{n,\left[\frac{k-2}{2}\right]\right\}}(-1)^{n-h}\left(\frac{k-1}{2}\right)^{2n-2h}\begin{pmatrix}\displaystyle  n-\frac{k}{2}\\n-h\end{pmatrix}B_{k,k-2h-1}.\end{split}
\end{equation}
In particular,
\begin{equation}\label{eq11_27_2}
\text{Res}_{s=\frac{k}{2} }Z_k(s)=\frac{1}{(k-1)!}.
\end{equation}
\end{theorem}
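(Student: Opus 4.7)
The plan is to read off the residues directly from the formula \eqref{eq11_26_1}, exploiting the fact that the Hurwitz zeta function $\zeta(w;a)$ has a unique simple pole at $w=1$ with residue $1$, while every other factor appearing in the double sum is an entire function of $s$.

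First I would locate the pairs $(l,j)$ for which the Hurwitz-zeta factor becomes singular at $s=k/2-n$. Setting its argument equal to $1$ gives $2s+2l-j=1$ at $s=k/2-n$, hence $j=k-2n+2l-1$. Re-parametrising by $j=k-2h-1$ converts this to $l=n-h$, and the constraints $0\le j\le k-1$ together with $l\ge 0$ pin $h$ down to the range $0\le h\le \min\bigl\{n,[(k-1)/2]\bigr\}$. The borderline value $h=(k-1)/2$ is only possible when $k$ is odd, and then forces $j=0$, so by $B_{k,0}=0$ (Theorem \ref{theorem2}) that term vanishes. The effective summation range is therefore $0\le h\le \min\{n,[(k-2)/2]\}$, matching the statement.

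Next I would compute the residue of each surviving term. Because the Hurwitz argument $w=2s+\cdots$ has $dw/ds=2$, the pole contributes an overall factor $1/2$. The remaining prefactors, evaluated at $s=k/2-n$ and $l=n-h$, yield the sign $(-1)^{n-h}$, the binomial coefficient $\begin{pmatrix}n-k/2\\n-h\end{pmatrix}$, and the power $((k-1)/2)^{2(n-h)}$. Multiplying by $B_{k,k-2h-1}/(k-1)!$ and summing over the allowed $h$ then reproduces \eqref{eq11_27_1}. The special value \eqref{eq11_27_2} follows at once on taking $n=0$: only $h=0$ survives, the binomial and the power of $(k-1)/2$ both equal $1$, and $B_{k,k-1}=2$ by Theorem \ref{theorem7}, leaving exactly $1/(k-1)!$.

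The one technical point worth flagging is the interchange of the residue operation with the infinite sum over $l$. The series in \eqref{eq11_26_1} converges uniformly on compact subsets of any punctured neighbourhood of $s=k/2-n$, since for large $l$ the binomial and Hurwitz factors combined carry an effective geometric factor $((k-1)/(k+1))^{2l}$ that dominates any polynomial prefactors; so at most finitely many summands contribute to the pole and their residues may legitimately be added separately. I expect this uniform-convergence check to be the main obstacle in making the argument fully rigorous, but it is a routine estimate and the rest is pure bookkeeping.
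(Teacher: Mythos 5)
Your proposal is correct and follows essentially the same route as the paper: read the residues off the series of Theorem \ref{theorem1}, using that each Hurwitz factor contributes a simple pole in $s$ with residue $1/2$ (because of the $2s$ in its argument) and that Theorem \ref{theorem2} (via $B_{k,0}=0$) trims the summation range to $\min\{n,[(k-2)/2]\}$. The paper simply defers the bookkeeping to \cite{2}; your version spells it out, including the routine justification for taking residues termwise.
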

\begin{proof}
The proof is the same as that given in \cite{2}. Notice that
\begin{equation*}
\begin{pmatrix} -s\\l\end{pmatrix}=(-1)^l\frac{s(s+1)\ldots(s+l-1)}{l!}
\end{equation*}is a polynomial in $s$. The function $\displaystyle \zeta\left(2s+2l-j;\frac{k+1}{2}\right)$ has a simple pole at $\displaystyle s=\frac{j+1-2l}{2}$ with residue $1/2$. Eq. \eqref{eq11_27_1} is then established using Theorem \ref{theorem1} and Theorem \ref{theorem2}. Taking $n=0$ in \eqref{eq11_27_1}, then $h=0$ and $B_{k,k-1}=2$ implies \eqref{eq11_27_2}.
\end{proof}

\begin{remark}
The formula given in \cite{2} is twice the formula \eqref{eq11_27_1}. The discrepancy is because the authors in \cite{2} have taken the residue of $\displaystyle \zeta\left(2s+2l-j;\frac{k+1}{2}\right)$ at $\displaystyle s=\frac{j+1-2l}{2}$ to be 1, but it should be $1/2$.

\end{remark}

\begin{corollary}
If $k\geq 2$ is even and $\displaystyle n\geq \frac{k}{2}$ is an integer, then
\begin{equation*}
\text{Res}_{s=\frac{k}{2}-n}Z_k(s)=0.
\end{equation*}
\end{corollary}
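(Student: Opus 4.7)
The plan is to apply the residue formula from Theorem \ref{theorem8} and show that every term in the finite sum vanishes when $k$ is even and $n \geq k/2$, because of a binomial-coefficient vanishing.

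First, I would unpack the summation range. When $k$ is even, $\left[\tfrac{k-2}{2}\right] = \tfrac{k}{2}-1$. Since we are assuming $n \geq \tfrac{k}{2}$, the minimum $\min\{n,\left[\tfrac{k-2}{2}\right]\}$ equals $\tfrac{k}{2}-1$, so the index $h$ in \eqref{eq11_27_1} runs over $0,1,\ldots,\tfrac{k}{2}-1$. In particular, $h < \tfrac{k}{2}$ for every term in the sum.

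Next I would examine the binomial coefficient $\binom{n-k/2}{n-h}$. The upper entry $n-\tfrac{k}{2}$ is a nonnegative integer by hypothesis. The lower entry satisfies
\begin{equation*}
n-h \;\geq\; n - \left(\frac{k}{2}-1\right) \;=\; \left(n-\frac{k}{2}\right) + 1 \;>\; n-\frac{k}{2}.
\end{equation*}
Thus the lower entry strictly exceeds the upper entry, both being nonnegative integers, so $\binom{n-k/2}{n-h}=0$. This forces every term in \eqref{eq11_27_1} to vanish, and hence $\text{Res}_{s=k/2-n}Z_k(s)=0$.

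I do not foresee a real obstacle here; the result is essentially a bookkeeping consequence of Theorem \ref{theorem8} combined with the observation that Theorem \ref{theorem2} truncates the residue sum at $h=\tfrac{k}{2}-1$. The only mild care needed is to verify that, for even $k$, the floor simplifies to $\tfrac{k}{2}-1$ rather than $\tfrac{k}{2}$, which is what allows the strict inequality $n-h > n-\tfrac{k}{2}$ and thereby kills the binomial coefficient. In effect, this corollary sharpens the location of possible poles of $Z_k(s)$ from $s = \tfrac{k}{2}-n$, $n\geq 0$, down to $s=1,2,\ldots,\tfrac{k}{2}$ in the even case, matching the statement advertised in the abstract.
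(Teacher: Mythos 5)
Your proposal is correct and follows essentially the same route as the paper: apply the residue formula \eqref{eq11_27_1} and observe that $\binom{n-k/2}{n-h}=0$ because the lower entry exceeds the nonnegative upper entry. You are in fact slightly more careful than the paper's own wording, since you make explicit the strict inequality $n-h>n-\tfrac{k}{2}$ coming from $h\leq\tfrac{k-2}{2}$, which is what actually kills the binomial coefficient.
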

\begin{proof}
This follows from \eqref{eq11_27_1} using the fact that when $k$ is even and $\displaystyle  n\geq \frac{k}{2}$, $\displaystyle  n-\frac{k}{2}$ is a nonnegative integer. Moreover, $\displaystyle n-h\geq  n-\frac{k}{2}$ since $\displaystyle
h\leq \frac{k-2}{2}$. Therefore,
\begin{equation*}
\begin{pmatrix}\displaystyle  n-\frac{k}{2}\\n-h\end{pmatrix}=0.
\end{equation*}
\end{proof}

\begin{corollary}
If $k\geq 2$ is even, $Z_k(s)$ is a meromorphic function with at most simple poles at $\displaystyle s=1,\ldots, \frac{k}{2}$.
\end{corollary}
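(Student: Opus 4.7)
The plan is to combine Theorem \ref{theorem1} with the previous corollary. Theorem \ref{theorem1} establishes the meromorphic continuation of $Z_k(s)$ to the entire complex plane, with at most simple poles confined to the discrete set $s=\frac{k}{2}-n$ for $n=0,1,2,\ldots$. For even $k$, this candidate pole set splits naturally into two pieces: the values $s=\frac{k}{2},\frac{k}{2}-1,\ldots,1$, corresponding to $n=0,1,\ldots,\frac{k}{2}-1$, and the values $s=0,-1,-2,\ldots$, corresponding to $n\geq \frac{k}{2}$.

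I would then invoke the preceding corollary, which asserts that for even $k$ and $n\geq \frac{k}{2}$, the residue of $Z_k(s)$ at $s=\frac{k}{2}-n$ vanishes. Since every pole in this regime is at most simple and carries zero residue, it is a removable singularity. Hence $Z_k(s)$ extends holomorphically across each point $s=0,-1,-2,\ldots$, leaving at most simple poles only at the finitely many points $s=1,2,\ldots,\frac{k}{2}$.

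There is really no hard step here; the entire content is bookkeeping. The only subtlety worth flagging is the conceptual point that a simple ``pole'' with zero residue is truly removable, so that after the cancellation the locus of actual poles shrinks to the claimed finite set. Everything else is immediate from results already proved in the excerpt.
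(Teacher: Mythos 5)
Your proposal is correct and is exactly the argument the paper intends: the corollary is stated without proof precisely because it follows immediately from Theorem \ref{theorem1} (meromorphic continuation with at most simple poles at $s=\frac{k}{2}-n$) together with the preceding corollary (vanishing residues for $n\geq\frac{k}{2}$ when $k$ is even), a simple pole with zero residue being removable.
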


Now we prove the following result stated in \cite{2}:
\begin{theorem}\label{theorem6}
If $k\geq 3$ is  odd and $n$ is a positive integer, then
$$Z_k(-n)=0.$$
\end{theorem}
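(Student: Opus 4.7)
The plan is to evaluate the formula from Theorem \ref{theorem3} at $s=-n$ and exploit the vanishing of $\zeta(s)$ at negative even integers. Since $\binom{-s}{l}\big|_{s=-n}=\binom{n}{l}$ vanishes for $l>n$, the outer sum in \eqref{eq11_26_1} truncates to $0\leq l\leq n$. Because $k$ is odd, $(k+1)/2$ is a positive integer, so $\zeta(s;(k+1)/2)=\zeta(s)-\sum_{r=1}^{(k-1)/2}r^{-s}$. By Theorem \ref{theorem2}, $B_{k,j}$ can be nonzero only for $j\in\{2,4,\ldots,k-1\}$, so every surviving Hurwitz-zeta argument $-2n+2l-j$ is a negative even integer of absolute value at least $2$. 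At such points $\zeta(-N)=0$, whence $\zeta(-N;(k+1)/2)=-\sum_{r=1}^{(k-1)/2} r^{N}$.

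Substituting this into \eqref{eq11_26_1} and interchanging the (now finite) sums over $l$, $j$, and $r$, I obtain
\begin{equation*}
Z_k(-n)=-\frac{1}{(k-1)!}\sum_{r=1}^{(k-1)/2} r^{2n}\left[\sum_{l=0}^{n}(-1)^l\binom{n}{l}\left(\frac{k-1}{2r}\right)^{2l}\right]\sum_{j=0}^{k-1}B_{k,j}r^j.
\end{equation*}
The bracket collapses by the binomial theorem to $(1-((k-1)/(2r))^2)^n$, and combined with the $r^{2n}$ prefactor this yields $(r^2-((k-1)/2)^2)^n$.

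Finally, the two remaining factors of each $r$-summand vanish on complementary subsets of $\{1,\ldots,(k-1)/2\}$: the factored form \eqref{eq11_26_8} reads
\begin{equation*}
\sum_{j=0}^{k-1}B_{k,j}r^j=2r^2(r^2-1)(r^2-4)\cdots\left(r^2-\left[\tfrac{k-3}{2}\right]^2\right),
\end{equation*}
which vanishes for each $r\in\{1,2,\ldots,(k-3)/2\}$, while the factor $(r^2-((k-1)/2)^2)^n$ kills the single remaining term $r=(k-1)/2$. Hence every term in the outer sum vanishes and $Z_k(-n)=0$. There is no genuine obstacle in this approach; the main care required is in spotting the dual vanishing mechanism, which hinges on the odd-$k$ hypothesis being used twice, to make $(k+1)/2$ an integer and (via Theorem \ref{theorem2}) to restrict the contributing $j$ to even values.
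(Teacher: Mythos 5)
Your proof is correct and uses exactly the same ingredients as the paper's: truncation of the $l$-sum at $s=-n$, the vanishing of $\zeta$ at negative even integers to reduce the Hurwitz zeta to the finite sum $-\sum_r r^N$, and the factorization of $\sum_j B_{k,j}x^j$ (equivalently Theorem \ref{theorem12}). The only difference is cosmetic: you perform the binomial collapse over $l$ first and then invoke the complementary vanishing over $r$, whereas the paper applies Theorem \ref{theorem12} first and finishes with $\sum_l(-1)^l\binom{n}{l}=0$; these are the same cancellation in a different order.
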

\begin{proof}
From Theorem \ref{theorem1} and  Theorem \ref{theorem2}, we have
\begin{equation*}\begin{split}
Z_k(-n)=&\frac{1}{(k-1)!}\sum_{l=0}^{n} (-1)^l \begin{pmatrix} n\\l\end{pmatrix}\left(\frac{k-1}{2}\right)^{2l}
\sum_{h=0}^{\frac{k-3}{2}}B_{k,k-2h-1}\zeta\left(-2n+2l-k+2h+1;\frac{k+1}{2}\right).
\end{split}\end{equation*}
Now since the Riemann zeta function vanishes at negative even integers, we find that for $0\leq l\leq n$ and $\displaystyle 0\leq h\leq\tfrac{k-3}{2}$, 
\begin{equation*}\begin{split}
\zeta\left(-2n+2l-k+2h+1;\frac{k+1}{2}\right)=&\zeta(-2n+2l-k+2h+1)-\sum_{p=1}^{\frac{k-1}{2}}p^{2n-2l+k-2h-1}\\
=&-\sum_{p=1}^{\frac{k-1}{2}}p^{2n-2l+k-2h-1}.
\end{split}\end{equation*}
 Therefore,
\begin{equation*}\begin{split}
Z_k(-n)=&-\frac{1}{(k-1)!}\sum_{l=0}^{n} (-1)^l \begin{pmatrix} n\\l\end{pmatrix}\left(\frac{k-1}{2}\right)^{2l}
\sum_{p=1}^{\frac{k-1}{2}}p^{2n-2l}\sum_{h=1}^{\frac{k-3}{2}}B_{k,k-2h-1}p^{k-2h-1}.
\end{split}\end{equation*}
Theorem \ref{theorem12} then shows that 
\begin{equation*}\begin{split}
Z_k(-n)=&-\frac{1}{(k-1)!}\sum_{l=0}^{n} (-1)^l \begin{pmatrix} n\\l\end{pmatrix}\left(\frac{k-1}{2}\right)^{2l}
\left(\frac{k-1}{2}\right)^{2n-2l}\sum_{h=1}^{\frac{k-3}{2}}B_{k,k-2h-1}\left(\frac{k-1}{2}\right)^{k-2h-1}\\
=&- \sum_{l=0}^{n} (-1)^l \begin{pmatrix} n\\l\end{pmatrix}\left(\frac{k-1}{2}\right)^{2n}\\
=&0.
\end{split}\end{equation*}
The last equality follows from the fact that
$$\sum_{l=0}^{n}  (-1)^l \begin{pmatrix} n\\ l \end{pmatrix}=0$$when $n\geq 1$.
\end{proof}
\begin{remark}
When $k=1$, we have $$Z_1(s)=2\zeta(2s).$$Hence Theorem \ref{theorem6} can be considered as the generalization of the fact that $\zeta(-2n)=0$ if $n$ is a positive integer.
\end{remark}

As a generalization of
$$\zeta(0)=-\frac{1}{2},$$ we have
\begin{theorem}\label{theorem9}
If $k\geq 3$ is an odd integer, then
\begin{equation}
Z_k(0)=-1.
\end{equation}
\end{theorem}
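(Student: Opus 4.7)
The plan is to specialize the argument used in the proof of Theorem \ref{theorem6} to the case $n=0$, noting the only difference is at the very last step where the alternating binomial sum behaves differently. I would begin by evaluating the formula of Theorem \ref{theorem1} (equivalently Theorem \ref{theorem3}) at $s=0$. Since $\binom{-s}{l}$ vanishes at $s=0$ for every $l\geq 1$ (the factor $-s$ kills it) and equals $1$ for $l=0$, the outer series collapses to a single term, giving
\begin{equation*}
Z_k(0)=\frac{1}{(k-1)!}\sum_{j=0}^{k-1}B_{k,j}\,\zeta\!\left(-j;\tfrac{k+1}{2}\right).
\end{equation*}

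Next I would use Theorem \ref{theorem2} (in the refined form \eqref{eq11_26_8}) to reduce the $j$-sum. For $k$ odd, $B_{k,j}=0$ unless $j$ is even and $\geq 2$; reindexing via $j=k-2h-1$ with $0\leq h\leq (k-3)/2$ puts the remaining sum into the same shape that appears in Theorem \ref{theorem6}. Since $(k+1)/2$ is a positive integer and $-j=2h-k+1$ is a negative even integer in the relevant range, the Hurwitz zeta splits as
\begin{equation*}
\zeta\!\left(2h-k+1;\tfrac{k+1}{2}\right)=\zeta(2h-k+1)-\sum_{p=1}^{(k-1)/2}p^{\,k-2h-1}=-\sum_{p=1}^{(k-1)/2}p^{\,k-2h-1},
\end{equation*}
because the Riemann zeta vanishes at negative even integers.

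After interchanging the order of summation, the expression becomes
\begin{equation*}
Z_k(0)=-\frac{1}{(k-1)!}\sum_{p=1}^{(k-1)/2}\sum_{j=1}^{k-1}B_{k,j}\,p^{\,j},
\end{equation*}
where I used that the odd-$j$ terms contribute zero, so restoring the full range $1\leq j\leq k-1$ costs nothing. Now Theorem \ref{theorem12} finishes the proof directly: the inner sum equals $0$ for $p=1,\ldots,(k-3)/2$ by \eqref{eq11_28_2}, and equals $(k-1)!$ for $p=(k-1)/2$ by \eqref{eq11_28_1}. Only a single term survives, yielding $Z_k(0)=-1$.

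There is no genuine obstacle here; the identity is essentially a by-product of the computation in Theorem \ref{theorem6}. The only subtlety worth flagging explicitly is that at $n=0$ the alternating binomial sum $\sum_{l=0}^{n}(-1)^{l}\binom{n}{l}$ equals $1$ rather than $0$, which is precisely why $Z_k(-n)$ vanishes for $n\geq 1$ but $Z_k(0)$ picks up the value $-1$.
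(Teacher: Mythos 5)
Your proposal is correct and follows essentially the same route as the paper: collapse the $l$-series at $s=0$ since $\binom{-s}{l}$ vanishes for $l\geq 1$, write the Hurwitz zeta values at negative even integers as $-\sum_{p=1}^{(k-1)/2}p^{j}$, and invoke Theorem \ref{theorem12} so that only the $p=\tfrac{k-1}{2}$ term survives, giving $-1$. Your closing remark correctly identifies the one point where the $n=0$ case diverges from Theorem \ref{theorem6}, namely that $\sum_{l=0}^{n}(-1)^{l}\binom{n}{l}$ equals $1$ rather than $0$.
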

\begin{proof}
Notice that when $s=0$,
\begin{align*}
\begin{pmatrix}-s\\l\end{pmatrix}=\begin{cases} 1,\quad \text{if}\quad l=0,\\
0,\quad\text{if} \quad l\geq 1\end{cases}.
\end{align*}As in the proof of Theorem \ref{theorem6}, we find that when $k$ is odd,
\begin{equation*}\begin{split}
Z_k(0)=&-\frac{1}{(k-1)!}
\sum_{p=1}^{\frac{k-1}{2}} \sum_{h=1}^{\frac{k-3}{2}}B_{k,k-2h-1}p^{k-2h-1}\\
=&-\frac{1}{(k-1)!}
 \sum_{h=1}^{\frac{k-3}{2}}B_{k,k-2h-1}\left(\frac{k-1}{2}\right)^{k-2h-1}\\
=&-1.
\end{split}\end{equation*}
\end{proof}

Finally, we consider the value of $Z_k(s)$ at $s=-n$ when $k$ is an even integer and $n$ is a nonnegative integer:
\begin{theorem}\label{theorem10}
When $k$ is even,
\begin{equation}\label{eq11_27_5}
Z_k(0)=\frac{1}{(k-1)!}\sum_{h=0}^{\frac{k-2}{2}}B_{k,k-2h-1}\left(\left[2^{2h+1-k}-1\right]\zeta\left(2h+1-k\right)+\frac{1}{k-2h}\left(\frac{k-1}{2}\right)^{k-2h}\right)-1.
\end{equation}
Moreover, if $n$ is a positive integer,
\begin{equation}\label{eq11_27_6}\begin{split}
&Z_k(-n)\\=&\frac{1}{(k-1)!}\sum_{h=0}^{\frac{k-2}{2}}B_{k,k-2h-1}\left(\sum_{l=0}^n (-1)^l\begin{pmatrix} n\\l\end{pmatrix}\left(\frac{k-1}{2}\right)^{2l}\left[2^{2h+1+2l-2n-k}-1\right]\zeta\left(2h+1+2l-2n-k\right)\right.\\
&\left.+\frac{(-1)^nn!}{2}\frac{\Gamma\left( \frac{k-2h}{2}  \right)}{\Gamma\left( \frac{k-2h}{2}+n+1\right)}\left(\frac{k-1}{2}\right)^{k-2h+2n}\right).
\end{split}\end{equation}
\end{theorem}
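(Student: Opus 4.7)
The plan is to substitute $s=-n$ directly into the representation \eqref{eq11_26_1} of Theorem \ref{theorem3} and simplify term by term. Since by Theorem \ref{theorem2} only $j=k-2h-1$ with $0\le h\le(k-2)/2$ give nonzero $B_{k,j}$, the inner sum collapses to those odd indices. The key subtlety is that, although $\binom{-s}{l}\big|_{s=-n}=\binom{n}{l}$ vanishes for $l>n$, the Hurwitz zeta $\zeta(2s+2l-k+2h+1;(k+1)/2)$ has a simple pole at $l=l^*:=n+k/2-h$, which exceeds $n$ for every admissible $h$. The resulting $0\cdot\infty$ limits produce exactly the $\Gamma$-terms of \eqref{eq11_27_5}--\eqref{eq11_27_6} and would be missed by a naive substitution.

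For the regular indices $l=0,1,\ldots,n$, the Hurwitz argument $s_0:=2h+1+2l-2n-k$ is a negative odd integer, so I would apply the half-integer identity
$$\zeta\!\left(s_0;\tfrac{k+1}{2}\right)=(2^{s_0}-1)\zeta(s_0)-\sum_{p=0}^{k/2-1}(p+\tfrac12)^{-s_0}.$$
The $(2^{s_0}-1)\zeta(s_0)$ piece reproduces directly the explicit Riemann-zeta contributions in \eqref{eq11_27_5}--\eqref{eq11_27_6}. In the remaining finite sum over $p$, swapping the order of summation exposes the inner sum $\sum_h B_{k,k-2h-1}(p+\tfrac12)^{k-2h-1}$, which by Theorem \ref{theorem12}(ii) vanishes for $p+\tfrac12\in\{\tfrac12,\ldots,(k-3)/2\}$ and, by \eqref{eq11_28_1}, equals $(k-1)!$ when $p+\tfrac12=(k-1)/2$. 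What remains simplifies to $-\bigl(\tfrac{k-1}{2}\bigr)^{2n}\sum_{l=0}^n(-1)^l\binom{n}{l}$, equal to $-1$ for $n=0$ and $0$ for $n\ge1$, which accounts for the trailing $-1$ in \eqref{eq11_27_5}.

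For the singular contribution at $l=l^*$, I would expand $\binom{-s}{l^*}$ about $s=-n$. Exactly one factor, $-s-n$, vanishes there, so $\binom{-s}{l^*}=(s+n)\,f_{k,h,n}+O((s+n)^2)$ with
$$f_{k,h,n}=\frac{(-1)^{k/2-h}\,n!\,\Gamma((k-2h)/2)}{\Gamma((k-2h)/2+n+1)},$$
obtained by differentiating the product representation of $\binom{-s}{l^*}$ and keeping track of signs. Since $2l^*-k+2h=2n$, the Hurwitz zeta satisfies $\zeta(s_0;(k+1)/2)\sim 1/(2(s+n))$ near $s=-n$, so the limit of the product is $f_{k,h,n}/2$. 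Multiplying by $(-1)^{l^*}\bigl(\tfrac{k-1}{2}\bigr)^{2l^*}B_{k,k-2h-1}/(k-1)!$, using $2l^*=2n+k-2h$, and simplifying the sign $(-1)^{l^*+k/2-h}=(-1)^{n+k-2h}=(-1)^n$ yields exactly the $\Gamma$-term of \eqref{eq11_27_6}. When $n=0$, the identity $\Gamma((k-2h)/2)/\Gamma((k-2h)/2+1)=2/(k-2h)$ collapses this to the $\tfrac{1}{k-2h}\bigl(\tfrac{k-1}{2}\bigr)^{k-2h}$ term in \eqref{eq11_27_5}.

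The principal obstacle will be the singular contribution: a direct substitution $s=-n$ sets every $l=l^*$ summand to zero because $\binom{n}{l^*}=0$ and thereby misses the $\Gamma$-correction entirely. All remaining work---the Hurwitz decomposition, the applications of Theorem \ref{theorem12}, and the alternating-binomial identity---is essentially bookkeeping once the indeterminate limit has been executed correctly.
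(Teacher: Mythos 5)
Your proposal is correct and follows essentially the same route as the paper's proof: substitute $s=-n$, split off the singular term at $l^*=n+(k-2h)/2$ where the vanishing factor $(s+n)$ of the binomial coefficient cancels the simple pole of the Hurwitz zeta (the paper isolates this factor via a $\Gamma$-function identity, you by expanding the product directly, with the same result), and reduce the regular terms via $\zeta(s;\tfrac{k+1}{2})=\zeta(s;\tfrac12)-\sum_p(p+\tfrac12)^{-s}$, Theorem \ref{theorem12}, and $\zeta(s;\tfrac12)=(2^s-1)\zeta(s)$. All the sign and $\Gamma$-factor computations in your sketch check out against the stated formulas.
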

\begin{proof}
From Theorem \ref{theorem1} and Theorem \ref{theorem2}, we have
\begin{equation}\begin{split}
Z_k(s)=&\frac{1}{(k-1)!}\sum_{l=0}^{\infty}   \frac{s(s+1)\ldots(s+l-1)}{l!}\left(\frac{k-1}{2}\right)^{2l}
\\&\times \sum_{h=0}^{\frac{k-2}{2}}B_{k,k-2h-1}\zeta\left(2s+2l+2h-k+1;\frac{k+1}{2}\right).
\end{split}
\end{equation}

For $s=-n$, where $n$ is a nonnegative integer, the nonzero terms come from $0\leq l\leq n$ and $\displaystyle l=\frac{k-2h}{2}+n$, $\displaystyle 0\leq h\leq \frac{k-2}{2}$. Notice that
\begin{equation*}
 \frac{s(s+1)\ldots(s+l-1)}{l!}= \frac{\Gamma(s+l)}{\Gamma(l+1)}\frac{s(s+1)\ldots(s+n-1)}{\Gamma(s+n+1)}(s+n).
\end{equation*}
Therefore,
\begin{equation*}\begin{split}
&Z_k(-n)\\=&\frac{1}{(k-1)!}\sum_{l=0}^{n}  (-1)^l \begin{pmatrix} n\\ l \end{pmatrix}\left(\frac{k-1}{2}\right)^{2l}
  \sum_{h=0}^{\frac{k-2}{2}}B_{k,k-2h-1}\zeta\left(-2n+2l+2h-k+1;\frac{k+1}{2}\right)\\
  &+\frac{1}{(k-1)!}\sum_{h=0}^{\frac{k-2}{2}}\frac{(-1)^nn!}{2}\frac{\Gamma\left(  \frac{k-2h}{2}\right)}{\Gamma\left( \frac{k-2h}{2}+n+1\right)}
  \left(\frac{k-1}{2}\right)^{k-2h+2n}B_{k,k-2h-1}.
\end{split}
\end{equation*}
Now
\begin{align*}
&\zeta\left( -2n+2l+2h-k+1;\frac{k+1}{2}\right)\\=&\zeta\left(-2n+2l+2h-k+1;\frac{1}{2}\right)-\sum_{p=\frac{1}{2},\frac{3}{2},\ldots,\frac{k-1}{2}}p^{k-2h-1}p^{2n-2l}.
\end{align*}
Theorem \ref{theorem12} shows that
\begin{equation*}
\begin{split}
&\frac{1}{(k-1)!}\sum_{l=0}^{n}  (-1)^l \begin{pmatrix} n\\ l \end{pmatrix}\left(\frac{k-1}{2}\right)^{2l}
  \sum_{h=0}^{\frac{k-2}{2}}B_{k,k-2h-1}\zeta\left(-2n+2l+2h-k+1;\frac{k+1}{2}\right)\\
  =&\frac{1}{(k-1)!}\sum_{l=0}^{n}  (-1)^l \begin{pmatrix} n\\ l \end{pmatrix}\left(\frac{k-1}{2}\right)^{2l}
  \sum_{h=0}^{\frac{k-2}{2}}B_{k,k-2h-1} \zeta\left(-2n+2l+2h-k+1;\frac{1}{2}\right)\\&- \sum_{l=0}^{n}  (-1)^l \begin{pmatrix} n\\ l \end{pmatrix}\left(\frac{k-1}{2}\right)^{2n}.
\end{split}
\end{equation*}
However,
\begin{equation*}
\sum_{l=0}^{n}  (-1)^l \begin{pmatrix} n\\ l \end{pmatrix}=\begin{cases} 1,\quad \text{if}\quad n=0\\
0,\quad \text{if} \quad n\geq 1\end{cases}.
\end{equation*}Moreover,
\begin{equation}\label{eq11_27_10}
\zeta\left(s;\frac{1}{2}\right)=(2^s-1)\zeta(s).
\end{equation}
The assertion of the theorem follows.
\end{proof}

\vspace{0.5cm}
\begin{corollary}
If $k\geq 2$ and  $n$ is a nonnegative integer,
$Z_k(-n)$ is a rational number.
\end{corollary}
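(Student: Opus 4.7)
The plan is to split on the parity of $k$ and read off rationality from the explicit evaluations already established. If $k$ is odd, Theorem \ref{theorem9} gives $Z_k(0)=-1$, and Theorem \ref{theorem6} gives $Z_k(-n)=0$ for every positive integer $n$; both values are manifestly rational, so the odd case is immediate.

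Suppose instead that $k$ is even. I would apply the formulas \eqref{eq11_27_5}--\eqref{eq11_27_6} of Theorem \ref{theorem10} and verify that each of their factors is rational. By Corollary \ref{co1}(ii), $2^{k-2}B_{k,k-2h-1}$ is an integer, so each $B_{k,k-2h-1}$ is rational. The Riemann zeta function appears at arguments of the form $2h+1+2l-2n-k$ with $0\leq l\leq n$ and $0\leq h\leq (k-2)/2$; since $k$ is even these are all odd, and a direct range check shows they lie in $\{1-2n-k,\ldots,-1\}$, so they are negative odd integers. At such points $\zeta$ is rational, via the classical Bernoulli-number formula $\zeta(1-2m)=-B_{2m}/(2m)$. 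The prefactors $2^{2h+1+2l-2n-k}-1$ are visibly rational.

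The remaining ingredients are the binomial coefficients $\binom{n}{l}$, the integer powers of $(k-1)/2$, the rational factor $1/(k-2h)$ appearing in \eqref{eq11_27_5}, and the gamma ratio $\Gamma((k-2h)/2)/\Gamma((k-2h)/2+n+1)$ in \eqref{eq11_27_6}. Because $k$ is even and $0\leq h\leq (k-2)/2$, the number $(k-2h)/2$ is a positive integer, so the gamma ratio is a quotient of two factorials and hence rational. Assembling these observations, $Z_k(-n)$ is a finite rational linear combination of rational numbers, hence rational. The only mildly delicate step is confirming that no $\zeta$-argument coincides with the pole at $s=1$, but the range calculation already shows every argument is at most $-1$, so this is automatic and no genuine obstacle arises.
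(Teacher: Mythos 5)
Your proposal is correct and follows the same route as the paper: the odd case is dispatched by Theorems \ref{theorem6} and \ref{theorem9}, and the even case by inspecting the explicit formulas of Theorem \ref{theorem10} together with Corollary \ref{co1} and the rationality of $\zeta$ at negative odd integers. You merely supply the range checks (that every $\zeta$-argument is a negative odd integer and that the gamma ratio is a quotient of factorials) that the paper leaves implicit.
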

\begin{proof}
This follows from Theorem \ref{theorem6}, Theorem \ref{theorem9}, Theorem \ref{theorem10}, Corollary \ref{co1} and the fact that $\zeta(-j)$ is a rational number when $j$ is an odd positive integer.
\end{proof}

\bigskip
\section{The Minakshisundaram-Pleijel zeta function of real projective spaces}

The  Minakshisundaram-Pleijel zeta function of the $k$-dimensional real projective space $P^k$ is given by
\begin{equation*}
L_k(s)=\sum_{n=1}^{\infty} \frac{P_k(2n)}{[2n(2n+k-1)]^s}.
\end{equation*}
From \eqref{eq11_25_2} and Theorem \ref{theorem4}, we find that
\begin{equation*}
\begin{split}
P_k(2n)=&\frac{1}{(k-1)!}\sum_{j=0}^{k-1}C_{k,j}\left(2n+\frac{k-1}{2}\right)^j\\
=&\frac{1}{(k-1)!}\sum_{j=0}^{k-1}2^jB_{k,j}\left( n+\frac{k-1}{4}\right)^j.
\end{split}
\end{equation*}
As the proof of Theorem \ref{theorem3}, we immediately obtain
\begin{theorem}\label{theorem3p}
For $k\geq 2$,
\begin{equation}
L_k(s)=\frac{2^{-2s}}{(k-1)!}\sum_{l=0}^{\infty} (-1)^l \begin{pmatrix} -s\\l\end{pmatrix}\left(\frac{k-1}{4}\right)^{2l}
\sum_{j=0}^{k-1}2^jB_{k,j}\zeta\left(2s+2l-j;\frac{k+3}{4}\right).
\end{equation}

\end{theorem}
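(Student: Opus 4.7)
The plan is to mimic the proof of Theorem \ref{theorem3} essentially verbatim, with an overall factor of $2^{-2s}$ and a rescaled binomial parameter $(k-1)/4$ in place of $(k-1)/2$. First I would complete the square in the denominator: since
\begin{equation*}
2n(2n+k-1) = 4\left[\left(n+\frac{k-1}{4}\right)^2 - \left(\frac{k-1}{4}\right)^2\right],
\end{equation*}
the binomial expansion, absolutely convergent and uniform for $n\geq 1$, gives
\begin{equation*}
[2n(2n+k-1)]^{-s} = 2^{-2s}\sum_{l=0}^\infty (-1)^l \binom{-s}{l}\left(\frac{k-1}{4}\right)^{2l}\left(n+\frac{k-1}{4}\right)^{-2l-2s}.
\end{equation*}

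Next I would insert the expression for $P_k(2n)$ already established just above the statement,
\begin{equation*}
P_k(2n) = \frac{1}{(k-1)!}\sum_{j=0}^{k-1} 2^j B_{k,j}\left(n+\frac{k-1}{4}\right)^j,
\end{equation*}
multiply it against the binomial expansion, and interchange summations. The inner sum over $n$ then takes the form
\begin{equation*}
\sum_{n=1}^\infty \left(n+\frac{k-1}{4}\right)^{-(2s+2l-j)} = \zeta\left(2s+2l-j;\frac{k+3}{4}\right),
\end{equation*}
where the shift $m=n-1$ together with the identity $1+\tfrac{k-1}{4}=\tfrac{k+3}{4}$ recasts the sum as a Hurwitz zeta function. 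Assembling the three ingredients yields exactly the claimed identity.

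The one point that needs care is the exchange of summations, and it is handled identically to Theorem \ref{theorem3}: the $j$-sum is finite, the $l$-series converges uniformly in $n\geq 1$, and the defining Dirichlet series for $L_k(s)$ converges absolutely when $\text{Re}\,s>k/2$. I do not anticipate any genuine obstacle; the argument is essentially bookkeeping of the prefactors $2^{-2s}$ and $2^j$ and the shift of the Hurwitz parameter from $(k+1)/2$ (as in Theorem \ref{theorem3}) to $(k+3)/4$ (as here).
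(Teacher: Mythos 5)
Your proposal is correct and is exactly the argument the paper intends: the paper derives $P_k(2n)=\frac{1}{(k-1)!}\sum_j 2^jB_{k,j}(n+\frac{k-1}{4})^j$ and then says the rest follows "as the proof of Theorem \ref{theorem3}," which is precisely your completion of the square, binomial expansion, and interchange of sums producing $\zeta(2s+2l-j;\frac{k+3}{4})$ with the $2^{-2s}$ prefactor. No gaps.
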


From this, it follows that
\begin{theorem}\label{theorem11}
If $k\geq 2$, $L_k(s)$ at most has simple poles at $\displaystyle s=\frac{k}{2}-n$, where  $n$ is a nonnonnegative integer. Moreover,
\begin{equation}\begin{split}
&\text{Res}_{s=\frac{k}{2}-n}L_k(s)\\=&\frac{1}{ 4(k-1)!}\sum_{h=0}^{\min\left\{n,\left[\frac{k-2}{2}\right]\right\}}(-1)^{n-h}\left(\frac{k-1}{2}\right)^{2n-2h}\begin{pmatrix}\displaystyle  n-\frac{k}{2}\\n-h\end{pmatrix}B_{k,k-2h-1}.\end{split}
\end{equation}
In particular,
\begin{equation}
\text{Res}_{s=\frac{k}{2} }L_k(s)=\frac{1}{2(k-1)!}.
\end{equation}Moreover, this implies that when $k$ is even, $L_k(s)$ at most has simple poles at $\displaystyle s=1,  \ldots,\frac{k}{2}$.
\end{theorem}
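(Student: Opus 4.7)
The plan is to follow the proof of Theorem~\ref{theorem8} almost verbatim, with Theorem~\ref{theorem3p} replacing Theorem~\ref{theorem1}. First I would combine Theorem~\ref{theorem3p} with Theorem~\ref{theorem2} to retain only the nonzero inner terms $j = k-2h-1$, yielding
\begin{equation*}
L_k(s) = \frac{2^{-2s}}{(k-1)!} \sum_{l=0}^{\infty} \frac{s(s+1)\cdots(s+l-1)}{l!} \left(\frac{k-1}{4}\right)^{2l} \sum_{h=0}^{(k-2)/2} 2^{k-2h-1}\, B_{k,k-2h-1}\, \zeta\!\left(2s+2l-k+2h+1;\, \frac{k+3}{4}\right).
\end{equation*}
Since $(-1)^l\binom{-s}{l}$ is a polynomial in $s$, every singularity of $L_k(s)$ comes from a Hurwitz zeta factor. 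The Hurwitz zeta $\zeta(u;a)$ has a single simple pole at $u = 1$ with residue $1$, so each inner term is meromorphic with a unique simple pole at $s = (k-2h-2l)/2 = k/2 - (h+l)$ and residue $1/2$ in $s$. Writing $n = h+l$ shows that the possible poles of $L_k(s)$ are simple and located at $s = k/2 - n$ for nonnegative integers $n$.

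To compute the residue at $s = k/2 - n$, I would sum the contributions over pairs $(h,l)$ with $h + l = n$, $0 \le h \le (k-2)/2$ and $l = n-h \ge 0$; that is, $h$ ranges from $0$ to $\min\{n, [(k-2)/2]\}$. At $s = k/2 - n$, the coefficient $(-1)^l\binom{-s}{l}$ collapses to $\binom{n-k/2}{n-h}$. The only real care required is the bookkeeping of powers of $2$: combining $2^{-2s} = 2^{2n-k}$, the inner $2^{k-2h-1}$, the Hurwitz residue $1/2$, and the factor $\bigl(\tfrac{k-1}{4}\bigr)^{2(n-h)}$ collapses to $\tfrac{1}{4}\bigl(\tfrac{k-1}{2}\bigr)^{2(n-h)}$, producing exactly the claimed formula. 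This is the main (and only) obstacle, and it is purely mechanical.

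Specializing to $n = 0$ forces $h = 0$, and since $B_{k,k-1} = 2$ by Theorem~\ref{theorem7}, the residue reduces to $\tfrac{1}{2(k-1)!}$, as asserted. For the final claim, assume $k$ is even and $n \ge k/2$. Then $n - k/2$ is a nonnegative integer, and every index $h$ occurring in the sum satisfies $h \le (k-2)/2 < k/2$, hence $n - h > n - k/2$ and $\binom{n-k/2}{n-h} = 0$. Consequently $\text{Res}_{s = k/2 - n} L_k(s) = 0$ for every $n \ge k/2$, leaving possible simple poles only at $s = 1, 2, \ldots, k/2$. No new ideas beyond those in the proof of Theorem~\ref{theorem8} and its corollaries are needed.
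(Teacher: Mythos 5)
Your proposal is correct and follows exactly the route the paper intends: the paper gives no separate proof of Theorem~\ref{theorem11}, deriving it from Theorem~\ref{theorem3p} in the same way Theorem~\ref{theorem8} is derived from Theorem~\ref{theorem1}, including the residue $1/2$ of $\zeta(2s+2l-j;a)$ in the variable $s$ and the vanishing binomial coefficient for even $k$ and $n\geq k/2$. Your power-of-two bookkeeping ($2^{2n-k}\cdot 2^{-4n+4h}\cdot 2^{k-2h-1}\cdot 2^{-1}=\tfrac14\cdot 2^{-(2n-2h)}$) checks out and yields the stated $\tfrac{1}{4(k-1)!}$ prefactor.
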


 Compare Theorem \ref{theorem11} to Theorem \ref{theorem8}, we notice that
 \begin{equation*}
 \text{Res}_{s=\frac{k}{2}-n}L_k(s)=\frac{1}{2} \text{Res}_{s=\frac{k}{2}-n}Z_k(s).
 \end{equation*}

 Now,   Theorem \ref{theorem12} implies that
\begin{equation*}\begin{split}
 \sum_{j=0}^{k-1}2^jB_{k,j}\left(\frac{k-1}{4}\right)^j=& (k-1)!.
 \end{split}\end{equation*}Moreover, when $k$ is odd, and
$$ p=\frac{1}{2}, \frac{2}{2},  \ldots, \frac{k-5}{4},\frac{k-3}{4},$$then
 \begin{equation*}\begin{split}
 \sum_{j=0}^{k-1}2^jB_{k,j}p^j=& 0
 \end{split}\end{equation*}
 Together with the identity \eqref{eq11_27_10}, we find as in the proof of Theorem \ref{theorem6} and Theorem \ref{theorem9} that

\begin{theorem}
If $k\geq 2$ is an odd integer,
\begin{align*}
L_k(0)=-1.
\end{align*}Moreover, if $n$ is a positive integer,
\begin{align*}
L_k(-n)=0.
\end{align*}
\end{theorem}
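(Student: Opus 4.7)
The plan is to follow the template of Theorem \ref{theorem6} and Theorem \ref{theorem9}, using the formula of Theorem \ref{theorem3p} in place of Theorem \ref{theorem1}. Setting $s=-n$ in the projective formula, the factor $(-1)^l\binom{-s}{l}$ becomes $(-1)^l\binom{n}{l}$, which vanishes for $l>n$, so the series in $l$ truncates. By Theorem \ref{theorem2}, only the indices $j=k-2h-1$ with $0\le h\le (k-3)/2$ survive in the inner sum, since $k$ is odd.

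Next I would observe that at $s=-n$ the Hurwitz argument $2s+2l-j$ equals $-2n+2l-k+2h+1$, which for $k$ odd is a negative even integer of absolute value at least $2$. The evaluation of $\zeta(s';(k+3)/4)$ at such $s'$ splits according to $k\bmod 4$: if $k\equiv 1\pmod 4$ the parameter $(k+3)/4$ is a positive integer and one shifts down to $\zeta(s')=0$; if $k\equiv 3\pmod 4$ it is a half-integer and one shifts down to $\zeta(s';1/2)$, which by \eqref{eq11_27_10} also vanishes at negative even $s'$. In either case the shift identity $\zeta(s;a+N)=\zeta(s;a)-\sum_{p=0}^{N-1}(p+a)^{-s}$ gives
$$\zeta(s';(k+3)/4)=-\sum_{q\in Q}q^{-s'},$$
where $Q$ is a finite subset of $\{1/2,1,3/2,\ldots,(k-1)/4\}$ whose largest element is exactly $(k-1)/4$.

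Plugging this back in and using $B_{k,j}=0$ for odd $j$ (Theorem \ref{theorem2}, since $k$ is odd), the $h$-sum reassembles into $\sum_{j}2^jB_{k,j}q^j$ for each fixed $q\in Q$. By the two identities recalled immediately before the statement of the theorem (consequences of Theorem \ref{theorem12}), this polynomial in $q$ vanishes at every element of $Q$ except $q=(k-1)/4$, where its value is $(k-1)!$. The remaining factors simplify via $4\left(\frac{k-1}{4}\right)^2=\left(\frac{k-1}{2}\right)^2$, leaving
$$L_k(-n)=-\left(\frac{k-1}{2}\right)^{2n}\sum_{l=0}^{n}(-1)^l\binom{n}{l},$$
and the standard identity $\sum_{l=0}^{n}(-1)^l\binom{n}{l}=\delta_{n,0}$ delivers $L_k(0)=-1$ and $L_k(-n)=0$ for $n\ge 1$.

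The main obstacle is the uniform treatment of the two residue classes $k\equiv 1,3\pmod 4$: the Hurwitz parameter $(k+3)/4$ is an integer in one case and a half-integer in the other, so the reduction to a finite sum uses two different base identities. Once one checks that in both cases the resulting set $Q$ is contained in $\{1/2,1,\ldots,(k-1)/4\}$ with maximal element exactly $(k-1)/4$, everything collapses by the same bookkeeping already used in the proofs of Theorems \ref{theorem6} and \ref{theorem9}.
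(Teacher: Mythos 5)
Your proposal is correct and follows essentially the same route as the paper, which simply invokes the formula of Theorem \ref{theorem3p}, the rescaled identities from Theorem \ref{theorem12}, and the relation $\zeta(s;\tfrac{1}{2})=(2^s-1)\zeta(s)$ to repeat the argument of Theorems \ref{theorem6} and \ref{theorem9}. The only difference is that you spell out the $k\bmod 4$ case split for the Hurwitz parameter $(k+3)/4$ explicitly, which the paper leaves implicit; your bookkeeping (the set $Q$ with maximal element $(k-1)/4$) checks out in both cases.
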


\bigskip

\bibliographystyle{amsplain}

\begin{thebibliography}{10}

\bibitem{1} E. Carletti and G. Monti Bragadin, \emph{On Dirichlet series associated with polynomials}, Proc. Amer. Math. Soc. \textbf{121} (1994), 33--37.

\bibitem{2} E. Carletti and G. Monti Bragadin, \emph{On  Minakshisundaram-Pleijel zeta functions of spheres}, Proc. Amer. Math. Soc. \textbf{122} (1994), 993--1001.
\bibitem{8} J. Choi, \emph{Determinants of the Laplacians on the $n$-dimensional unit sphere $S^n$}, Advances in Difference Equations \textbf{2013} (2013), 236.
\bibitem{7} J. Choi and J.R. Quine, \emph{Zeta regularized products and functional determinants on spheres}, Rocky Mountain J. Math. \textbf{26} (1996), 719--729.
\bibitem{4} C. Jordan, \emph{Calculus of finite differences}, Chelsea, New York, 1965.


\bibitem{6} S. Minakshisundaram, \emph{Zeta functions on the sphere}, J. Indian Math. Soc. (N.S.)  \textbf{13}  (1949), 41--48.


\bibitem{5} S. Minakshisundaram and A. Pleijel, \emph{Some properties of the eigenfunctions of the Laplace operator on Riemannian manifolds}, Canad. J. Math. \textbf{1} (1949), 242--256.
\bibitem{3} Y. Mizuno, \emph{The spectral zeta function of the unit $n$-sphere and an integral treated by Ramanujuan}, Kyushu J. Math. \textbf{60} (2006), 317--330.
\end{thebibliography}

\end{document}